\definecolor{gray3}{rgb}{.67,.62,.62}
\newcommand{\rref}[1]{(\ref{#1})}
\newcommand{\qed}{\hfill $\blacksquare$} 
\newtheorem{definition}{Definition}
\newtheorem{theorem}{Theorem}
\newtheorem{corollary}{Corollary}
\newtheorem{assumption}{Assumption}
\newtheorem{remark}{Remark}
\newtheorem{example}{Example}
\renewenvironment{proof}[1][\!\!]{\noindent {\em Proof #1. }}{\qed}
\def\ie{{\it i.e.}}\def\eg{{\it e.g.}}
\def\cf{{\it cf.\ }}
\def\bfx{\mbox{\boldmath $x$}}
\def\bfy{\mbox{\boldmath $y$}}
\def\bfz{\mbox{\boldmath $z$}}
\def\bfe{\mbox{\boldmath $e$}}
\def\bfh{\mbox{\boldmath $h$}}
\def\bff{\mbox{\boldmath $f$}}
\def\bfd{\mbox{\bf d}}
\newcommand\mR{\mathbb{R}}\newcommand\mZ{\mathbb{Z}}
\def\S{\mathcal S}\def\cK{\mathcal K}
\def\I{\mathcal I}\def\cL{\mathcal L}
\def\N{\mathcal N}\def\cKL{\cK\cL}
\newcommand{\address}{%
$^{**}$Control of Complex Systems Laboratory, Institute of 
Problems of Mech. Engg., Bolshoi av., 61, V.O., St-Petersburg, 
199178, Russia.  E-mail: efde@mail.ru
   $^*$C.N.R.S, UMR 8506, Laboratoire de Signaux et Syst\`emes,
SUPELEC, Plateau de Moulon, 91192 Gif s/Yvette, France. 
E-mail: panteley@lss.supelec.fr, loria@lss.supelec.fr
}
\begin{document}

\title{\ \\[-5mm] \LARGE Robust output stabilization: improving performance \\ via supervisory control\thanks{{\address}}}
\author{\large
Denis Efimov,$^{**}$\quad 
Elena Panteley,$^*$\quad 
Antonio Lor\'{\i}a$^*$
}

\date{}%
\parskip = 4pt

 \maketitle

 \begin{abstract}
We analyze robust stability, in an input-output sense, of switched stable systems. The primary goal (and contribution) of this paper is to design switching strategies to guarantee that input-output stable systems remain so under switching. We propose two types of {\em supervisors}: dwell-time and hysteresis based. While our results are stated as tools of analysis they serve a clear purpose in design: to improve performance. In that respect, we illustrate the utility of our findings by concisely addressing a problem of observer design for Lur'e-type systems; in particular, we design a hybrid observer that ensures ``fast'' convergence with ``low'' overshoots. As a second application of our main results we use hybrid control in the context of synchronization of chaotic oscillators with the goal of reducing control effort; an originality of the hybrid control in this context with respect to other contributions in the area is that it exploits the structure and chaotic behavior (boundedness of solutions) of Lorenz oscillators.
 \end{abstract}

\section{Introduction}

There exist many good reasons and practical motivations to use a set of controllers for a single plant as opposed to one controller: for instance, we may think of a complex system whose dynamic behavior can only be described satisfactorily, from a practical viewpoint, by using several models, each corresponding to a {\em mode} of the system or, in other words, each model being valid for state values in a specified region of the state space. In such case, it is common practice to implement a group of ``local'' controllers applied depending on the operating mode. Another common situation is that when design {\em constraints} are imposed by certain ``(sub)optimality'' goals: for instance, one may ask for a controlled plant to track an operating point under constraints regarding transient performance, speed of convergence (to the desired operating point), robustness with respect to uncertainties or measurement noise, {\em etc.} The term {\em sub}optimality is therefore understood in that sense: to obtain, for instance, the fastest speed of convergence among a set of possibilities (not necessarily {\em all} possibilities). 

In such a scenario, it may result convenient to use a set of controllers, each of which achieves the control goal by respecting {\em one } of the constraints. The natural question which arises next is how to trade off the advantages of each ``good'' controlled system. For instance, given a controller designed to achieve (the) fast(est) convergence and given another one which ensures stabilization with (the) small(est) overshoots, how to design a switching rule between the two controllers to achieve both, (relatively) fast convergence at a low price (small overshoots). We address this problem via  so-called {\em supervisory} control  --{\em cf.} \cite{M1,M2} and \cite[Chapter 6]{LIBBOOK}; that is, the goal is to the design an algorithm to appropriately {\em switch} between local {\em given} controls depending on their properties, thereby combining their characteristics to improve performance and possibly to achieve ``certain optimality'' over larger domains. 

For the case of two controls --one local and another global-- the problem of uniting controllers has different general solutions and motivations regarding stability and robustness --{\em cf. } \cite{TEEECC97,P1,PP,IF,E1}. While theoretically challenging in general, focused problems where switching between a local and a global controller brings solutions otherwise difficult or even impossible to achieve are common in the control of mechanical systems: see \cite{al:SACTA,LEFECC97} for the problems of set-point and tracking control of robot manipulators under switching, \cite{SPO2}  for the problem of ``swinging up the acrobot'', \cite{ASTPRI,HESMORNH} for a hybrid schemes to control chained-form systems; {\em etc.}

On a more general basis, switching among more than two controllers imposes significant challenges to analysis since classic stability theory does not apply. For instance, as is well-known, switching between stable modes of a closed-loop control system does not necessarily yield a stable behavior --{\em cf.} \cite{LIBBOOK}. Over the past years there has been an exponentially increasing interest on stability and stabilization of switched systems, notably for linear switched systems --\cf \cite{LINANT,HOUMICHYE}. An important trend is that based on Lyapunov-like methods as for instance, finding a common Lyapunov function --\cf \cite{BRANICKY98,MASSHO,PELDEC}; invariance principles --\cf \cite{GOESANTEE,HESTACLASALLE,HESLIBANGSON}; geometric methods --\cf\, \cite{MAN2000} to mention a few. 

While it stands as a fact that stable systems do not remain stable under totally arbitrary switching, an important aspect of stability theory for switched systems focusses on the analysis and design of the switching regime. In the context of supervisory control designing the ``switching rule'' comes to orchestrate a collection of (stable) nonlinear control systems. Among the range of conditions one can think of to impose on the switching regime in order to establish stability, one of the most notable is dwell time --\cf\, \cite{MECC93,M1} and its many variants (average, weak, state-dependent, {\em etc.}) --\cf\, \cite{HM,PSM,HLM}. Roughly, the introduction of dwell time in the switching rule  comes to imposing a minimal amount of time that it is imposed to each mode to remain active. Following a similar train of thought, we may mention another method which aims at analyzing switched systems as time-varying systems with the property persistency of excitation of the switching signal --\cf\, \cite{al:CDC07CHITOUR}.

Implicitly, in the previous paragraphs we have discussed the  stability problem for switched systems with respect to the trivial {\em state} motion $\bfx(t)\equiv 0$. However, irrespectively of the switching method, the control goal may vary: from pure stabilization to robustness with respect to external inputs --{\em cf.} \cite{VCL,XWL} or measurement noise --{\em cf.} \cite{P1} to the more general setting of output stabilization --\cf\, \cite{PAVLOVbook,BYRISI}. In such case, the stabilization goal is to bring an {\em output} motion close to a desired operating point. Moreover, it appears natural to consider robustness aspects hence to study stability in an input-output sense. This is the context in which we place our main results.

For a given set of ``local'' controllers, each achieving a control goal in one dynamic mode, we address the problem of designing a supervisor achieving stability in an input-output sense that is close to that of \cite{SONWANIOS}, while improving performance with respect to each controlled mode. Each closed-loop system (say a plant and a local controller) is assumed to have a common output to be controlled and for which certain ``optimality'' is imposed (fast convergence, small overshoots, robustness, {\em etc.}). Stability is measured taking into account external disturbances, as for instance in \cite{HESLIBTEE} yet, with respect to an {\em output} motion as opposed to the full state. More precisely, our definition of input-output stability is given in terms of $\mathcal K\mathcal L$ estimates and a generalized norm for the inputs which includes the usual Euclidean and Lebesgue norms. As an aside, the definition of input-to-output state stability that we use covers other definitions previously proposed in the literature. Under such setting we show that by appropriate switching (hysteresis-based and dwell-time-based) input-to-output stable systems conserve such property. 

The scenario previously described is similar (in spirit) to that studied in the recent paper \cite{HESCDC07} where the primary goal is to improve performance of switched control systems; in this reference it is assumed that the switching rule is {\em given} and the freedom left to the designer resides on setting adequately the initial conditions of the controllers. Another recent article along similar lines is \cite{ZHEWU} where the authors consider the more particular case of switching between {\em two} controlled systems, one global backstepping controller and one local linear-parameter-varying. 

In contrast to works on {\em optimal} control of switched systems --\cf\, \cite{SEUCORGIUBEM} and references therein, we do not derive (sub)optimal control laws but rather, we assume that a set of controllers is given and then, we design a supervisor to orchestrate the switching among these ``local'' controls. Freedom is left in the way the output motion space is partitioned to assign a controller to each of the identified modes. The latter is done on a case-by-case basis, following a rule of thumb which is what is often done in control practice. 

We illustrate the latter point, and thereby the utility of our theoretical findings, by addressing the problem of hybrid observer design for Lur'e-type systems and synchronization of chaotic (Lorenz) oscillators. In the first case we design a hybrid dwell-time observer with the ``optimization'' goal of reducing the time of convergence of estimation errors. For synchronization of chaotic oscillators we present a hysteresis-based supervisor which aims at reducing control effort, as measured both in absolute amplitude (peaks) and energy (integral of squared value). The latter is achieved by applying {\em no control} over certain regions of the state space \ie, using the fact that Lorenz systems have a complex attractor hence, the solutions tend to a bounded (relatively ``small'') domain without control effort. 

The rest of the paper is organized as follows: Section \ref{sec:prel} contains basic definitions and notation; in Section \ref{sec:problem} we lay the standing assumptions for our main results, which are presented in Section \ref{sec:main}. In Section \ref{sec:lure} we present an application of our findings to observer design and synchronization of chaotic oscillators. We conclude with some remarks in Section \ref{sec:conlusion}.

\section{Preliminaries}
\label{sec:prel}
As usual, a continuous function $\sigma :{\mR}_+ \to {\mR}_+ $ is of class $\cK$ if 
it is strictly increasing and $\sigma \left( {\,0} \right)=0$; additionally 
it is of class $\cK_\infty $ if it is also radially unbounded; a 
continuous function $\beta :{\mR}_+ \times {\mR}_+ \to {\mR}_+ $ is of class $\cK\cL$, if 
$\beta (\cdot ,t)\in \cK$ for any $t\in {\mR}_+ $, and $\beta (s,\cdot )$ is 
strictly decreasing to zero for any $s\in {\mR}_+ $. 

Consider a family of systems
\begin{equation}
\label{eq1}
\dot {\bfx}={\rm {\bff}}_i (\,{\rm {\bfx}},{\rm {\bf d}}\,),
\quad
{\rm {\bfy}}={\rm {\bfh}}(\,{\rm {\bfx}}\,),
\quad
i\in \I,
\end{equation}
where ${\rm {\bfx}}\in \mR^n$ denotes the state vector; ${\rm {\bf d}}\in \mR^m$ denotes a  
disturbance; ${\rm {\bfy}}\in \mR^p$ denotes an output and $i$ is an index taking values from the countable set $\I\subseteq\mZ_+$. Assume that the functions ${\rm {\bff}}_i :\mR^{n+l}\to \mR^n$, ${\rm {\bf 
h}}:\mR^n\to \mR^p$ are continuous and locally Lipschitz with respect to ${\rm 
{\bfx}}$, $i\in \I$. It is assumed that ${\rm {\bf d}}:{\mR}_+ \to \mR^m$ is 
Lebesgue measurable and essentially bounded, \ie, for all $t\ge 0$
\[
\vert \vert {\rm {\bf d}}\vert \vert _{\,[\,t_0 ,t\,)} =ess\,\sup \,\left\{ 
{\,\vert {\rm {\bf d}}(\,t\,)\vert ,\,\,t\in [\,t_0 ,t\,)\,} \right\}.
\]
For $t=+\infty$ we write $\|{{\bfd}}\| = \vert \vert {{\bfd}}\vert \vert _{[0,+\,\infty )} $. We denote by $\mR_{\mR^m}$ the set of functions such that $\vert \vert {{\bfd}}\vert |\,<+\,\infty $. Our definition of robust stability is stated in terms of the generalised norm $\S:\mR_{\mR^m} \times \mR^2\to {\mR}_+ $ defined, for any $t_0 \ge 0$ by
\begin{align}\label{2000}
  S\,[{{\bfd}},t_0 ,t]:=\,a\int\limits_{t_0 }^t {\omega (\vert {{ 
d}}(\tau )|)\,d\tau } + b \,\vert \vert {{\bfd}}\vert \vert _{[t_0 ,t)},
\\
a,\, b\geq 0,\quad a+b >0,\quad \omega \in \cK.
\end{align}
The set of essentially bounded functions ${{\bfd}}$ s.t. $S\,[{{ d}},0,+\,\infty ]$ $<+\,\infty $ is denoted as $M_{\mR^m} $ ($M_{\mR^m} \subset \mR_{\mR^m} )$. 

Let $i:{\mR}_+ \to \I$ be piecewise constant and continuous from the right then, the family of systems (\ref{eq1}) defines the following switched system
\begin{equation}
\label{eq2}
\dot {\bfx}={\rm {\bff}}_{i(\,t\,)} (\,{\rm {\bfx}},{\rm {\bf 
d}}\,),
\quad
{\rm {\bfy}}={\rm {\bfh}}(\,{\rm {\bfx}}\,).
\end{equation}
We say that the switching signal $i(t)$ has an average dwell-time $0<\tau _D <+\,\infty 
$ if between switches, for any time instants $t_2 \ge t_1 \ge 0$ it holds that
\[
N_{[t_1 ,t_2 )} \le N_0 +\frac{t_2 -t_1 }{\tau _D }
\]
 for an integer $1\le N_0 <+\,\infty $ and where $N_{[t_1 ,t_2 )} $ is the number of discontinuities (switches) of the signal $i(t)$ --{\em cf. } \cite{HM}. If the interval between any two 
switches is not less than $\tau _D $ then the switching signal is said to have dwell-time property and $N_0 =1$. The system (\ref{eq2}), for signal $i(t)$ with 
average dwell-time or simple dwell-time, has a finite number of switches on any 
finite-time interval and its solution is continuous and defined at least 
locally --\cf \cite{LIBBOOK}.
The switched system, for a switching signal $i(t)$, is called forward complete\footnote{ Necessary and sufficient conditions for a dynamical system (\ref{eq1}) to be forward complete with $S\,[{\rm {\bf d}},t_0 ,t]=\,\,\vert \vert {\rm {\bf d}}\vert \vert _{[\,t_0 ,t\,)} $ can be found in \cite{ANGSON-UFC}.} if for all initial conditions ${{\bfx}}_0 \in \mR^n$ and inputs ${{\bf d}}\in M_{\mR^m} $, the solutions ${\rm  {\bfx}}(t,{{\bfx}}_0 ,{{\bf d}})$ of the switched system (\ref{eq2}) are defined for all $t\ge 0$. We also denote the outputs by ${{\bfy}}(t,{\rm  {\bfx}}_0 ,{{\bf d}})={{\bfh}}({{\bfx}}(t,{{\bfx}}_0 ,{{\bf d}}))$. On occasions we may use the shorthand notation $\bfx(t)=\bfx(t,\bfx_0,\bfd)$, $\bfy(t)=\bfy(t,\bfx_0,\bfd)$.

 \begin{definition}
\label{Definition1} We say that, for a fixed $i\in \I$ a forward complete system (\ref{eq1}) is Input-to-Output Stable (IOS) with respect to the output $\bfy$ the input $\bfd$ and the norm $\S$ if, for all ${{\bfx}_0} \in \mR^n$ and $\bfd\in M_{\mR^m}$, there exist functions $\beta _i \in \cK\cL, \gamma _i \in \cK$ such that, for all $t\ge 0$,
\[
\vert {\rm {\bfy}}(\,t,{\rm {\bfx}}_0 ,{\rm {\bf d}}\,)\vert \,\,\le \beta 
_i (\,\vert {\rm {\bfx}}_0 \vert ,t\,)+\gamma _i (\,\S[\,{\rm {\bf 
d}},0,t\,]\,).
\]
We say  that a switched forward complete system (\ref{eq2}) with $i:{\mR}_+ \to \I$ is IOS with respect to the  output $\bfy$, the input $\bfd$ and the norm $\S$ if, for all $\bfx_0 \in \mR^n$ and $\bfd\in M_{\mR^m} $ there exist functions $\beta \in \cKL, \gamma \in \cK$ such that, for all $t \ge 0$,
\[ 
\vert {\rm {\bfy}}(\,t,{\rm {\bfx}}_0 ,{\rm {\bf d}}\,)\vert \,\,\le \beta 
(\,\vert {\rm {\bfx}}_0 \vert ,t\,)+\gamma (\,\S[\,{\rm {\bf d}},0,t\,]\,).
\]
\end{definition}
\begin{definition}
\label{Definition2} We say that for a fixed $i\in \I$ the forward complete system (\ref{eq1}) is state-independent IOS (SIIOS) with respect to the output $\bfy$, the input $\bfd$ and the norm $\S$ if, for all $\bfx_0 \in \mR^n$ and $\bfd\in M_{\mR^m} $, there exist functions $\beta'_i \in \cK\cL, \gamma'_i \in \cK$ such that, for all $t \ge 0$,
\[
\vert {\rm {\bfy}}(\,t,{\rm {\bfx}}_0 ,{\rm {\bf d}}\,)\vert \,\,\le 
{\beta }'_i (\,\vert {\rm {\bfh}}(\,{\rm {\bfx}}_0 \,)\vert ,t\,)+{\gamma 
}'_i (\,\S[\,{\rm {\bf d}},0,t\,]\,).
\]
We say that the switched forward complete system (\ref{eq2}) with $i:{\mR}_+ \to \I$ is SIIOS with respect to the output $\bfy$, the input $\bfd$ and the norm $\S$ if, for all $\bfx_0\in \mR^n$ and $\bfd\in M_{\mR^m} $, there exist functions $\beta'\in \cK\cL, \gamma'\in \cK$ such that, for all $t \ge 0$,
\[
\vert {\rm {\bfy}}(\,t,{\rm {\bfx}}_0 ,{\rm {\bf d}}\,)\vert \,\,\le 
{\beta }'(\,\vert {\rm {\bfh}}(\,{\rm {\bfx}}_0 \,)\vert ,t\,)+{\gamma 
}'(\,\S[\,{\rm {\bf d}},0,t\,]\,)
\]
The systems are exponentially SIIOS if $\beta'_i (s,r)=a\,s\,e^{-b\,r}$ or $\beta'(s,r)=a\,s\,e^{-b\,r}$ for some $a>0$, $b>0$. 
\end{definition}

For the case when $S\,[{\rm {\bf d}},t_0 ,t]=\,\,\vert \vert {\rm {\bf d}}\vert
\vert _{[\,t_0 ,t\,)} $, respectively $S$ is defined by \rref{2000}, closely connected input-output stability
properties and relations between them for nonlinear dynamical systems can be
found in \cite{SONWANIOS}; Lyapunov characterizations of these properties are
presented in \cite{SONWANIOS-LYA}. The difference between IOS and SIIOS
properties defined above strives in the dependence on initial conditions: for an
SIIOS system, if the initial amplitude ${\rm {\bfy}(t_0)}$ is ``small'' then the
amplitude ${\rm {\bfy}(t)}$ during transient remains ``small''; for IOS systems,
even for ``small'' initial amplitudes $\vert {\rm {\bfy}}(\,t_0 \,)\vert $ the
norm of ${\rm {\bfy}(t)}$ may be large during transients for ``large'' values of
$\vert {\rm {\bfx}}(\,t_0 \,)\vert $. However, the asymptotic behavior of IOS
and SIIOS systems is the same as they both ``forget'' about about the initial
conditions and converge to the set where the output is zero (in the case that
there are no external disturbances) or to some neighborhood of the latter
(where the size of the neighborhood is proportional to the disturbances
amplitude). For ${\rm {\bfy}}={\rm {\bfx}}$ both properties
are reduced to the well known properties of input-to-state stability (ISS) and integral ISS  properties --\cf\, \cite{SON1,IISS}.

\section{Problem statement}
\label{sec:problem}
Our main results rely on the following hypotheses. 
\begin{assumption}
\label{ass1}
{For each fixed }$i\in \I$,{ the system  (\ref{eq1}) is forward complete and SIIOS with respect to output }${{\bfy}}${ and input }${{\bf d}}${ with functions} $\beta _i \in \cK\cL$, $\gamma_i := \gamma \in \cK$ and norm $\S$. 
\end{assumption}
From Definition \ref{Definition2} it holds that $\beta _i (s,0)\ge s$ for all $s\ge 0$. {Without loss of generality we assume} that $\beta _i (s,0)>s$, $s>0$ and denote $\chi _i  (s)=\beta _i^{-1} (s,0)$.

The second standing assumption is that we dispose of an appropriate partition of the real line for normed output values and to each partition a stable system is assigned. It should be clear that the number of systems, $N$, and the number of partitions, $M$, is in general not the same. Let $q\in [0,M]$ define the strictly increasing sequence $\{\Delta_q\}$ and let each system from the family \rref{eq1} be labeled $\theta _q \in \I$ for each such $q$. 
{\begin{assumption}
\label{ass2}
 {A partition }$\mR_+ =\bigcup\limits_{q=0}^M {[\Delta _q ,\Delta _{q+1} )} 
, \Delta _0 =0, \Delta _{M+1} =+\,\infty ${ is given and, for each }$ q\in [0, M]${ there exists }$\theta _q \in \I$
{ such that } $\beta _{\theta _q } (\Delta _{q+1} ,0)\le \Delta 
_{q+2}$. { Furthermore, let $T_q :[\Delta _q ,\Delta _{q+1} )\to \mR_+ $ be given continuous, separated-from-zero (\ie, $0<T_{\min } =\mathop {\min }\limits_{0\le q\le M} \{\,\mathop {\inf }\limits_{\Delta _q \le s<\Delta _{q+1} } \{\,T_q (s)\,\}\,\}<+\,\infty $), bounded functions. }
\end{assumption}
Assumption \ref{ass2} holds, for instance, if $\beta _{\theta _q } (s,r)\le \beta _i (s,r)$ for all $r\in [0,T_q  (s))$, $s\in [\Delta _q ,\Delta _{q+1} )$, $i\in \I$, which implies that the output trajectories of the system $\theta _q $, starting off in the interval $[\Delta _q ,\Delta _{q+1} )$, converge the fastest to zero as compared to the outputs of any other system of the family (\ref{eq1}). In such  sense, this introduces domains of ``local optimality'' for each system of (\ref{eq1}); therefore, by ensuring the ``right'' switching between the locally optimal systems it is possible to guarantee fast (suboptimal) convergence to zero for the output of (\ref{eq1}). 
	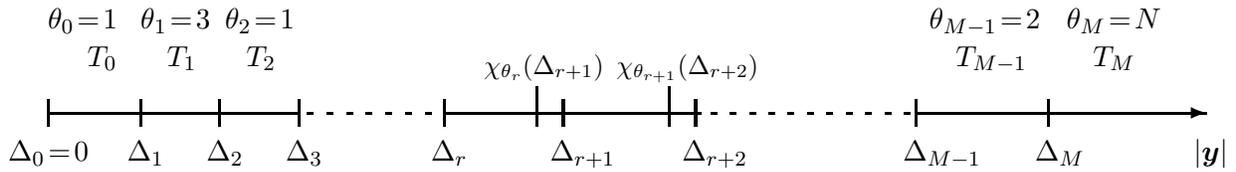
\begin{figure}[h]
\ \\[1cm]
\mbox{}\hspace{-2cm}
  \begin{picture}(0,0)
    \put(75,0){\thicklines\line(1,0){95}}
    \thicklines\dashline[+30]{3}(143,0)(220,0)
    \put(75,-5){\thicklines\line(0,1){10}}
    \put(110,-5){\thicklines\line(0,1){10}}
    \put(140,-5){\thicklines\line(0,1){10}}
    \put(170,-5){\thicklines\line(0,1){10}}
    \put(60,-18){$\Delta_0\!=\!0$}
    \put(105,-18){$\Delta_1$}
    \put(135,-18){$\Delta_2$}
    \put(165,-18){$\Delta_3$}
    \put(75,32){$\theta_0\!=\!1$}
    \put(110,32){$\theta_1\!=\!3$}
    \put(142,32){$\theta_2\!=\!1$}
    \put(90,18){$T_0$}
    \put(120,18){$T_1$}
    \put(150,18){$T_2$}
  \end{picture}
\hspace{5cm}  
  \begin{picture}(0,0)
    \put(75,0){\thicklines\line(1,0){97}}
    \put(75,-5){\thicklines\line(0,1){10}}
    \put(110,-5){\thicklines\line(0,1){15}}
    \put(120,-5){\thicklines\line(0,1){10}}
    \put(160,-5){\thicklines\line(0,1){15}}
    \put(170,-5){\thicklines\line(0,1){10}}
    \put(165,-18){$\Delta_{r+2}$}
    \put(140,15){\small $\chi_{\theta_{r+1}}(\Delta_{r+2})$}
    \put(115,-18){$\Delta_{r+1}$}
    \put(90,15){\small $\chi_{\theta_r}(\Delta_{r+1})$}
    \put(70,-18){$\Delta_r$}
  \end{picture}
\hspace{6cm}  
  \begin{picture}(0,0)
    \put(75,0){\thicklines\vector(1,0){110}}
    \thicklines\dashline[+30]{3}(-10,0)(80,0)
    \put(75,-5){\thicklines\line(0,1){10}}
    \put(125,-5){\thicklines\line(0,1){10}}
    \put(70,-18){$\Delta_{M-1}$}
    \put(120,-18){$\Delta_{M}$}
    \put(80,32){$\theta_{M-1}\!=\!2$}
    \put(132,32){$\theta_{M}\!=\!N$}
    \put(90,18){$T_{M-1}$}
    \put(142,18){$T_M$}
    \put(180,-18){$|\bfy|$}
  \end{picture}
\mbox{}\\[2mm] 
\caption{Illustrative example when Assumption \ref{ass2} holds}
  \label{fig1}
\end{figure}}
      A graphical illustration of Assumption \ref{ass2} is presented in Fig. \ref{fig1}. Assume that the set $I=\{\,1,...,N\,\}$ with $N>1$. Then, a possible partition of ${\mR}_+ $ depending on possible values of $\vert {\rm {\bfy}}\vert$ is shown above. For each interval $[\,\Delta _q ,\Delta _{q+1} \,)$, $0\le q<M$ the number $\theta _q \in \I$ and the function $T_q :[\,\Delta _q ,\Delta _{q+1} \,)\to {\mR}_+ $ are given. According to Fig. \ref{fig1} on the intervals $[\,0,\Delta _1 \,)$ and $[\,\Delta _2 ,\Delta _3 \,)$ the system $\dot {\bfx}={\rm {\bff}}_1 (\,{\rm {\bfx}},{\rm {\bf d}}\,)$ from family (\ref{eq1}) provides ``optimal'' performance in terms of output speed of convergence, on the interval $[\,\Delta _1 ,\Delta _2 \,)$ the system $\dot {\bfx}={\rm {\bff}}_3 (\,{\rm {\bfx}},{\rm {\bf d}}\,)$ possesses the same property and so on. Dwell-time 
functions $T_q :[\,\Delta _q ,\Delta _{q+1} \,)\to {\mR}_+ $, $0\le q<M$ are used to ensure the  existence of delays between switches and right-continuity of the switching signal $i(t)$. In this way each system $\theta _q \in \I$ is active for, at least, $T_q $ units of time.
\begin{remark}
  \label{Remark1} 
Note that one can define functions $T_q :[\Delta _q ,\Delta _{q+1} )\to 
\mR_+ $ as solutions of 
\begin{eqnarray*}
&\beta _{\theta _0 } (\Delta _1 ,T_0 (s))=0.5\beta _{\theta _0 } 
(\Delta _1 ,0),
\quad
s\in [\Delta _0 ,\Delta _1 );
&\\
&  \beta _{\theta _q } (s,T_q (s))=\chi _{\theta _{q-1} } (\Delta _q ) 
&\\
& s\in [\Delta _q ,\Delta _{q+1} ),\ q\in [0,\, M].
&\end{eqnarray*}
Such function $T_q $ estimates the time that is required for the output trajectories of the system $\theta _q $,  to converge to the interval $[\Delta _{q-1} ,\Delta _{q} )$ from the interval $[\Delta _q ,\Delta _{q+1} )$; the first function $T_0$ is thus artificially imposed. One may replace these functions with any other $T_q :[\Delta _q ,\Delta _{q+1} )\to \mR_+ $, $0<q\le M$ admitting the same requirements of continuity, boundedness and that they are separated from zero. For example, a possible choice of $T_q$ for all $0<q\le M$ and for all $s\in [\Delta _q ,\Delta _{q+1} )$ is one which solves:
\[
\beta _{\theta _q } (s,T_q (s))+\gamma (D_q )=\Delta _{q+1} ,
\]\[
\max \{\,0,\Delta _{q+1} -\beta _{\theta _q } (\Delta _q ,0)\,\}<\gamma 
(D_q )<\Delta _{q+1} \,.
\]
In this case,  the functions $T_q $ majorate the amount of time that the output trajectories of system $\theta _q $ remain in the interval $[\Delta _q ,\Delta _{q+1} )$ while under the influence of disturbances with amplitude smaller (with respect to the norm operator $\mathcal S$) than $D_q$. That allows one to exhibit additional switches caused by disturbances smaller in norm than $D_q$. 
\end{remark}

Our control objective is to design a supervisor that generates a piecewise constant switching signal $i:\mR_+ \to \I$ (continuous from the right) such that the closed-loop system is IOS with respect to the output ${{\bfy}}$, the input ${{\bf d}}$ and norm $\S$. Additionally, it is required to use the system $\theta _q $ in the domain  $[\Delta _q ,\Delta _{q+1} )$ for each $0<q\le M$ which is assumed to provide local optimality. We address such problem by designing two types of supervisors: dwell-time and hysteresis based. 

\section{Supervisory output control}\label{sec:main}
\subsection{Dwell-time supervisor}
\label{section4}

Informally speaking, the dwell-time supervisor is defined as follows: consider a set of IOS systems labeled $\theta_q$ with $q \in [0,\ldots,M]$ and a given partition of the non-negative real line as described in Assumption \ref{ass2} and illustrated in Fig. \ref{fig1}. After a given instant $t_j$ at which the output trajectory lays in the interval $[\Delta_j,\Delta_{j+1})$ the next switching is determined as the earliest moment, {\em passed a dwell time}, such that the output in norm belongs to a partition $[\Delta_k,\Delta_{k+1})$. Note that the $k$th and the $j$th intervals are in no particular order with respect to each other and are not necessarily contiguous; {\em e.g.}, the output trajectory $|\bfy(t)|$ may pass from the interval $[\Delta_1,\Delta_{2})$ to the interval $[\Delta_5,\Delta_{6})$ or viceversa by `crossing' intermediate intervals during the dwell time. Once the switching instant is identified, the switching rule (the supervisor) takes the value corresponding to the interval in which the switch occurs; {\em i.e.}, the $k$th mode is selected. To avoid undesirable infinitely fast switching, besides the (variable) dwell-time,  rather subintervals strictly contained in $[\Delta_j, \Delta_{j+1})$ are considered.
 
More precisely, the dwell-time supervisor is defined as follows:
\begin{subequations}\label{eq3}
  \begin{align}
    t_0 &= 0,\ i(t_0 )=r,\ r\in \{0,\ldots, M\},\ 
 | \bfh(\bfx(t_0)) | \in \left[ \Delta _r,\,\Delta _{r+1}\right); \\
\nonumber 
t_k' &:= \mathop {\arg \inf }\limits_{t\ge t_j +T_{i(t_j )} } 
   \left\{\, |\bfh( \bfx(t) ) | \in \left[\Delta _k ,\chi _{\theta _k} (\Delta _{k+1})\,\! \right)\,\right\},\ 
k\in \{0,..,M\,\} \big\backslash \{i(t_j )\} \\
\label{eq3:b}%
t_{j+1} & = \min_{ k\in \{0,..,M\,\} \big\backslash \{i(t_j )\} } \left\{ t_k' \right\};\\
i(t_{j+1})&=k \mbox{ such that } |\bfh(\bfx(t_{j+1}))| \in [\Delta _k ,\chi _{\theta _k } (\Delta _{k+1} ))  \subset \left[\Delta _k ,\Delta _{k+1} \right).
  \end{align}
\end{subequations}
The dwell-time depends on the output trajectories \ie, $T_{i(t_j)} =T_{i(t_j )} (|\bfh(\bfx(t_j ))| )$, $t_j $ where $j=1,2,3,\ldots$ are switching times, $j$ is the index of the last switch and the signal $i(t)$ is piecewise constant for all $t$ such that $ |\bfh(\bfx(t_{j+1}))| \in \N=\bigcup\limits_{q=0}^{M-1} {[\chi _{\theta _q } (\Delta _{q+1} ),\Delta _{q+1} )}$. The definition of the set $\N$ prevents from a fast switching phenomenon (chattering regime) and it plays the role of hysteresis --{\em cf. } \cite{M,HLM,HM}. The second mechanism that prevents the system from chattering is {\em output}\footnote{In this respect we mention \cite{PSM} where dwell time depends on the whole state as opposed to an output of the system.} dependent dwell-time $T_{i(t_j )} $. Since by construction the functions $T_q $, $0\le q\le M$ are bounded and separated from zero the system (\ref{eq2}), with supervisor (\ref{eq3}), has dwell-time $T_q(s)\geq T_{\min}$ and the system undergoes a finite number of switches on any finite-time interval.
\begin{remark}
\label{rmk2}
Opposite to the supervisor considered in the state regulation problem --\cf \cite{E1,ASTPRI}, in the present context, the hysteresis set $\N$ is not sufficient, in
general, to guarantee a finite number of switches. This is because the output regulation set $\N$ may be non compact which stymies the estimation of the system's behavior on the set \eg\,, it is possible that a system's state trajectory (in norm) escapes to infinity  while the output remains in $\N$. In section \ref{sec:hysteresis} a hysteresis supervisor is studied in further detail.
\end{remark}
\begin{remark}
\label{rmk3}
The function $T_0 $ for the supervisor (\ref{eq3}), can be chosen identically equal to zero. In such situation, the system has average dwell-time $T_{\min } $ with 
$N_0 =2$. Indeed, according to (\ref{eq3}), $i(t_j )\ne i(t_{j+1} )$ so the  
system $\theta _0 $ may become active each second time only. Therefore, the 
time interval between two switches is bigger than $T_{\min }$; in this 
case, the  time $T_{\min } $ is calculated for $0<q\le M$. 
\end{remark}
\begin{remark}
  Using Assumption \ref{ass1} one can prove IOS stability for the switched system 
(\ref{eq2}) by the applying dwell-time or average-dwell time approaches; for this, it is required that the dwell-time $T_{\min } $ be sufficiently large --\cf \cite{al:IFAC08-1} for 
details). However, as it was shown in \cite{al:IFAC08-1} such approach is applicable only for 
locally exponentially stable systems (\ref{eq1}). Additionally, taking 
large values for the average dwell time or dwell-time constants implies that all 
functions $\beta _i $ from Assumption \ref{ass1} decrease to zero rather 
slowly (by construction of $T_{\min }$) which is not desirable from 
a practical point of view. Such restriction is opposite to our requirements since it is desirable to apply locally optimal controls in their domains. 
\end{remark}
\begin{theorem} 
\label{Theorem1}
Let Assumptions \ref{ass1} and \ref{ass2} hold. Then, the system (\ref{eq2}) with supervisor (\ref{eq3}), measurable disturbances $\bfd\in M_{\mR^m}$ and initial conditions $t_0 = 0$, $\bfx_0\in\mR^n$,\\
(i) is forward complete and, for all $t\geq t_0$,
  \begin{subequations}
        \label{UGSbound}
  \begin{eqnarray}
    \label{UGSbound:a}
  |\bfy(t,\bfx_0,{\rm {\bf d}})| & \le & \bar\beta(\max\{2\Delta_M, |\bfh(\bfx_0)|\}, 0)  + \bar\gamma(\S[{\rm {\bf d}},0,t])\\ 
    \label{UGSbound:b}
 \bar\beta(s,0) &:=& \sup_{i\in\mathcal I}\beta_i(s,0)\\
    \label{UGSbound:c}
 \bar\gamma(s) & := & \gamma(s) + \beta_{\theta_M}(2\gamma(s),0)\,;
  \end{eqnarray}
  \end{subequations}
(ii) furthermore, if the system dynamics undergoes a finite number of switches over infinite time \ie, if there exists $N<\infty$ such that $\{t_k\}\to t_N < \infty$ (define $t_{N+1} := +\infty$) then, there exists a continuous function $\bar\beta^{\scriptsize \bfd}:\mR_+\times\mR_+\to\mR_+$ such that $\bar\beta^{\scriptsize \bfd}(\cdot,s)$ is strictly increasing for each $s$ and $\bar\beta^{\scriptsize \bfd}(r,\cdot)$ is strictly decreasing to zero for each $r$, such that\footnote{That is, $\bar \beta^{\bfd}$ is ``of class $\mathcal K\mathcal L$'' with the exception that $\bar\beta^{\bfd}(0,\cdot)$ may be different from zero. } 
  \begin{eqnarray}
\label{finalbnd}|\bfy(t,\bfx_0,{\rm {\bf d}})|& \le & \bar\beta^{\scriptsize \bfd}(\vert {{\bfh}}({{\bfx}}_0 )\vert,t)+\gamma (S\,[{\rm {\bf d}},0,\infty])\,;
  \end{eqnarray}
\noindent (iii) finally, if ${\rm {\bf d}}(t)\equiv 0$ (with no assumption on the number of switchings) the output trajectories ${\rm {\bfy}}(t)$ satisfy
\begin{equation}
  \label{d0convergencebound}
|\bfy(t,\bfx_0,{\rm {\bf d}})| \leq \bar\beta^{\bf 0}(|\bfh(\bfx_0)|,t) 
\end{equation}
where $\bar\beta^{\bf 0}\in\cK\cL$. 
\end{theorem}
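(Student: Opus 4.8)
The plan is to glue the per-mode SIIOS estimates of Assumption~\ref{ass1} along the switching sequence, control the output magnitude at the switching instants, and finally reconstruct global $\cKL$-type bounds by patching the finite-time transient with the asymptotic decay. First I would settle forward completeness: each mode $\theta_q$ is forward complete and, since the dwell-time functions obey $T_q(\cdot)\ge T_{\min}>0$, the supervisor \rref{eq3} performs only finitely many switches on any bounded interval, so concatenating the (defined) solutions of the forward-complete modes yields a solution on $[0,\infty)$ with no finite escape. On each $[t_j,t_{j+1})$ the active mode $\theta_{i(t_j)}$ is SIIOS, hence with $s_j:=|\bfh(\bfx(t_j))|$ and for $t\in[t_j,t_{j+1})$,
\[
|\bfy(t)|\le \beta_{\theta_{i(t_j)}}(s_j,t-t_j)+\gamma(\S[\bfd,0,t]).
\]
The core lemma is the invariant that the switching values stay bounded by $\Delta_M$ plus a disturbance term. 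For a switch into an interior interval the acceptance region $[\Delta_k,\chi_{\theta_k}(\Delta_{k+1}))$ forces $s_{j+1}<\Delta_{k+1}\le\Delta_M$ outright; for a switch into the top interval I would invoke the dwell-time design of Remark~\ref{Remark1}, which guarantees that the $\beta$-component of $\theta_{i(t_j)}$ has descended below $\Delta_{i(t_j)}$ by the time $t_{j+1}\ge t_j+T_{i(t_j)}$, so that $s_{j+1}<\Delta_M+\gamma(\S[\bfd,0,\infty])$ for every $j\ge0$. This is precisely the mechanism that prevents the output from ``ratcheting up'' through the unbounded interval $[\Delta_M,\infty)$.

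For part (i) I would then split according to the active interval. On $[t_0,t_1)$ the bound is $|\bfy(t)|\le\bar\beta(|\bfh(\bfx_0)|,0)+\gamma(\S[\bfd,0,t])$; on any later interval with $i(t_j)<M$ the acceptance region gives $\beta_{\theta_{i(t_j)}}(s_j,0)<\Delta_{i(t_j)+1}\le\Delta_M$, so $|\bfy(t)|\le\Delta_M+\gamma(\S[\bfd,0,t])$; and on an interval with $i(t_j)=M$ the invariant $s_j<\Delta_M+\gamma(\S[\bfd,0,\infty])$ together with the weak triangle inequality $\beta(u+v,0)\le\beta(2u,0)+\beta(2v,0)$ yields
\[
|\bfy(t)|\le\beta_{\theta_M}(2\Delta_M,0)+\beta_{\theta_M}(2\gamma(\S[\bfd,0,t]),0)+\gamma(\S[\bfd,0,t]).
\]
Recognising the last two terms as $\bar\gamma(\S[\bfd,0,t])$ and bounding $\beta_{\theta_M}(2\Delta_M,0)\le\bar\beta(2\Delta_M,0)$, all three cases are dominated by $\bar\beta(\max\{2\Delta_M,|\bfh(\bfx_0)|\},0)+\bar\gamma(\S[\bfd,0,t])$, which is exactly \rref{UGSbound}; note this is also where the specific shape of $\bar\gamma$ (with its $\beta_{\theta_M}$ term) originates.

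For part (ii) I would use that after the last switch $t_N$ a single mode $\theta_{i(t_N)}$ stays active, so $|\bfy(t)|\le\beta_{\theta_{i(t_N)}}(s_N,t-t_N)+\gamma(\S[\bfd,0,\infty])$ decays to the residual $\gamma(\S[\bfd,0,\infty])$, while on $[0,t_N]$ the uniform bound of (i) applies. I would then majorise the transient constant on $[0,t_N]$ and the $\cKL$ tail on $[t_N,\infty)$ by one function that is strictly increasing in $r$ and strictly decreasing to zero in $t$, e.g. $\bar\beta^{\bfd}(r,t):=\big(\bar\beta(\max\{2\Delta_M,r\},0)+\beta_{\theta_M}(2\gamma(\S[\bfd,0,\infty]),0)+r\big)\kappa(t)$ with $\kappa$ continuous, strictly decreasing to zero, $\kappa\ge1$ on $[0,t_N]$ and decaying slowly enough to dominate the tail; since $t_N$ is finite but trajectory-dependent, both $\bar\beta^{\bfd}$ and its nonzero value at $r=0$ inherit the dependence on $\bfd$, matching the footnote. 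For part (iii), with $\bfd\equiv0$ one has $\gamma(\S[\bfd,0,t])\equiv0$, so each per-interval bound $\beta_{\theta_{i(t_j)}}(s_j,t-t_j)$ is strictly decreasing; I would argue that a switch is admissible only after the output has left the active interval from below, whence $s_{j+1}<\Delta_{i(t_j)}\le s_j$ and the index $i(t_j)$ strictly decreases at every switch. Thus there are at most $M$ switches, the top interval is visited at most once (as the initial one), $s_j\le|\bfh(\bfx_0)|$, and the sharper uniform bound $|\bfy(t)|\le\hat\beta(|\bfh(\bfx_0)|,0)$ with $\hat\beta:=\max_{0\le q\le M}\beta_{\theta_q}\in\cKL$ holds, now vanishing as $|\bfh(\bfx_0)|\to0$. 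Combining this with the post-$t_N$ decay and a bound $t_N\le\tau(|\bfh(\bfx_0)|)$ (finite since the interior transients are partition-bounded and only the single top-interval visit carries the $|\bfh(\bfx_0)|$-dependence), I would patch into a genuine $\cKL$ function such as $\bar\beta^{\mathbf{0}}(r,t):=\hat\beta(r,[t-\tau(r)]_+)+\hat\beta(r,0)e^{-t}$.

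I expect the main obstacle to be part (i): establishing the invariant $s_j<\Delta_M+\gamma(\S[\bfd,0,\infty])$, because the top interval is unbounded and its acceptance region offers no a priori control on the output at a switch into $\theta_M$, which forces one to lean on the dwell-time construction of Remark~\ref{Remark1} to ensure the output has returned below the active interval before each switch. A secondary difficulty is the $\cKL$-reconstruction in (ii)--(iii), where the transition time $t_N$ depends on the trajectory, so the patched bound must be allowed to depend on $\bfd$ (and be nonzero at the origin) in (ii) yet collapse to a genuine $\cKL$ function in the disturbance-free case (iii).
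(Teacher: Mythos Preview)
Your overall architecture is right---glue the SIIOS estimates across switching times, control $s_j=|\bfy(t_j)|$, and patch a transient/tail decomposition---and this is precisely what the paper does. But there is one genuine gap in your part~(i): you make the invariant $s_{j+1}<\Delta_M+\gamma(\S[\bfd,0,\infty])$ depend on the \emph{specific} dwell-time construction of Remark~\ref{Remark1}. That remark is not an assumption of the theorem; Assumption~\ref{ass2} allows \emph{any} continuous, bounded, separated-from-zero $T_q$, so your proof as written establishes only a special case.

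The paper closes this gap without touching the dwell-time values. The point you are missing is that the acceptance region already does the work, but one step earlier than you are looking. For every $j\ge 1$ the supervisor \rref{eq3} forces $s_j<\chi_{\theta_{i(t_j)}}(\Delta_{i(t_j)+1})$, hence $\beta_{\theta_{i(t_j)}}(s_j,0)<\Delta_{i(t_j)+1}$ and therefore
\[
|\bfy(t)|\le \Delta_{i(t_j)+1}+\gamma(\S[\bfd,t_j,t_{j+1}])\qquad\forall\,t\in[t_j,t_{j+1}).
\]
Now suppose a switch into the top mode occurs at $t_k$, i.e.\ $i(t_k)=M$. Since consecutive indices differ, $i(t_{k-1})\le M-1$, so the bound on the \emph{previous} interval $[t_{k-1},t_k)$ gives, by continuity, $|\bfy(t_k)|\le \Delta_{i(t_{k-1})+1}+\gamma(\S[\bfd,0,t_k])\le \Delta_M+\gamma(\S[\bfd,0,t_k])$. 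This yields your invariant with no hypothesis on $T_q$ beyond Assumption~\ref{ass2}. In other words, the ``a~priori control on the output at a switch into $\theta_M$'' that you thought was missing is supplied not by the (useless) acceptance region of mode $M$ but by the acceptance region recorded at the \emph{previous} switch.

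A smaller correction in part~(iii): it is not true that $i(t_j)$ strictly decreases at \emph{every} switch. At $t_0$ the output lies in $[\Delta_r,\Delta_{r+1})$ but not necessarily in $[\Delta_r,\chi_{\theta_r}(\Delta_{r+1}))$, so $\beta_{\theta_r}(s_0,0)$ can exceed $\Delta_{r+1}$ (Assumption~\ref{ass2} only guarantees $\beta_{\theta_r}(\Delta_{r+1},0)\le\Delta_{r+2}$). Hence $i(t_1)=r+1$ is possible, and the paper explicitly allows this one upward step. From $j\ge 1$ onward your monotonicity argument is correct, so the number of switches is still finite; just amend the statement and the bound on $t_N$ accordingly. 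Your constructions of $\bar\beta^{\bfd}$ and $\bar\beta^{\mathbf 0}$ are otherwise in the same spirit as the paper's.
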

\begin{proof}[of (i)]
On any time interval $[T_s ,\,T_e )$ with $T_e >T_s \ge 0$ a finite number of switches $N_{\left[ {\,T_s ,T_e } \right)} $ with the upper estimate 
$$N_{\left[ {\,T_s ,T_e } \right)} \le \frac{N_0 +(T_e -T_s)}{T_{\min }} $$ 
may occur. Between switches, the system's dynamics is continuous and is equivalent to that of a system from the family (\ref{eq1}) and which is forward complete for $i\in I$ fixed  arbitrarily. Since the signal $i(t)$ remains constant over  $[T_s ,\,T_e )$ the solutions of system  (\ref{eq2}), (\ref{eq3}) are continuous and are defined at least locally on the same interval. From continuity of solutions of \rref{eq2}, \rref{eq3} and forward completeness of $\dot\bfx = \bff_i(\bfx,\bfd)$ with fixed $i$, it follows that solutions of \rref{eq2}, \rref{eq3} are also defined at $T_e$. The same arguments hold for any interval $[T_s, T_e)$ therefore the switched system \rref{eq2}, \rref{eq3} is forward complete.

We prove next that the bound \rref{UGSbound} holds. From (\ref{eq3}) we have $i(0)=r$ where $r\in [0,\, M]$ and  $\vert {{\bfh}}({\rm  {\bfx}}_0)\vert \in [\Delta _r,\,\Delta _{r+1}) $. According to  Assumption \ref{ass1}, in this case,
\begin{equation}
\label{bndy:t0t1}
  \vert {{\bfy}}(t,{{\bfx}}_0 ,{{\bf d}})\vert \,\,\le \beta 
_{\theta _r } (\vert {{\bfh}}({{\bfx}}_0 )\vert ,t)+\gamma (S\,[{\rm {\bf d}},0,t_1 ]) \quad \forall\, t\in [0,t_1 )
\end{equation}
where $t_1$ is the first switching time and, according with (\ref{eq3}), $t_1 $ is when the output trajectory belongs to the interval $[\Delta _{i(t_1 )} ,\chi _{\theta _{i(t_1 )} } (\Delta _{i(t_1 )+1} ))$ with  $ i(t_1 )\in [0,\, M]$ and $i(t_1 )\ne i(0)$. If $t_1 = +\infty$ the proof ends replacing the bound in \rref{UGSbound} with 
\[
\vert {{\bfy}}(t,{{\bfx}}_0 ,{{\bf d}})\vert \,\,\le \beta 
_{\theta _r } (\vert {{\bfh}}({{\bfx}}_0 )\vert ,0)+\gamma (S\,[{\rm {\bf d}},0,\infty ]) \quad \forall\, t\geq 0\,.
\]
Otherwise, if $t_1<+\infty$, by continuity of solutions, continuity of $\bfh$ and forward completeness the solutions may be continued up to $t_1$ and 
\begin{equation}
 \label{kleenex}
\vert {{\bfy}}(t_1)\vert \,\,\le \chi _{\theta _{i(t_1 )} } (\Delta _{i(t_1 )+1})\,.
\end{equation}
Reconsidering the solutions with new ``initial'' condition $t_1$, $x(t_1)$ we obtain, from the definition of the supervisor \rref{eq3},
\[
\vert {{\bfy}}(t,{{\bfx}}(t_1) ,{{\bf d}})\vert \le  \beta _{\theta _{i(t_1)} } (\vert {{\bfy}}(t_1)\vert ,t-t_1) + \gamma (S\,[{\rm {\bf d}},t_1,t_2 ])\\
\quad
t\in [t_1 ,t_2 )\,.
\]
Using \rref{kleenex} and the definition of $\chi _{\theta _{i(t_1 )}}$ in the expression above, we obtain 
\[
\vert {{\bfy}}(t,{{\bfx}}(t_1) ,{{\bf d}})\vert \le\Delta _{i(t_1 )+1} +\gamma (S\,[{\rm {\bf d}},t_1,t_2 ]),\ t \in [t_1,t_2)\,.
\]
By definition, at time instant $t_2 $ the system's output reaches for the interval $[\Delta _{i(t_2 )} ,\chi _{\theta _{i(t_2 )} } (\Delta _{i(t_2 )+1} ))$ with $ i(t_2 )\in [0,\, M]$ and $i(t_2 )\ne i(t_1 )$ and $  \vert {{\bfy}}(t_2 )\vert \le \chi _{\theta _{i(t_2 )} } (\Delta _{i(t_2 )+1} )$. Following similar arguments as above we see that 
  \begin{align*}
\vert {{\bfy}}(t,\bfx(t_2),\bfd)\vert &\le
\Delta _{i(t_2 )+1} +\gamma (S\,[{\rm {\bf d}},t_2,t_3 ]),\quad t\in [t_2,t_3)
  \end{align*}
Repeating the previous steps we obtain, for arbitrary time instants $t_j $ for any $j>1$ 
  \begin{align}
\label{bndy:tjtj+1}
\vert {{\bfy}}(t,\bfx(t_j),\bfd)\vert &\le
\Delta _{i(t_j)+1} +\gamma (S\,[{\rm {\bf d}},t_j,t_{j+1} ]),\quad t\in [t_j,t_{j+1})\\
\label{bndy:tj}  \vert {{\bfy}}(t_j)\vert & \le  \chi _{\theta _{i(t_j)} } (\Delta _{i(t_j)+1} )\,.
  \end{align}
Now, if at some step $k$ the system's output (in norm) belongs to the last interval $[\Delta _M ,\Delta _{M+1} )$ with $\Delta _{M+1} =+\,\infty $ then, the maximum output amplitude may be estimated by
\begin{eqnarray}
\label{bndy:tM}
  \vert {{\bfy}}(t)\vert & \le & \beta _{\theta _M } (\vert {\rm 
{\bfy}}(t_k )\vert ,0)+\gamma  (S\,[{\rm {\bf d}},t_{k},t_{k+1} ]),
\quad t\in [t_k ,t_{k+1} )\,.
\end{eqnarray}
According with \rref{bndy:tjtj+1} the possible values of ${{\bfy}}(t_k )$ are bounded by\footnote{Where, in view of the definition of the supervisor --\cf \rref{eq3:b} we exclude the value $i(t_k)+1=M+1=\infty$. }
\[
\vert {{\bfy}}(t_k )\vert \,\,\le \Delta _M +\gamma (S\,[{\rm {\bf d}},0,t ]),
\]
Therefore, for all $t\in [t_k ,t_{k+1} )$,
\[
\vert {{\bfy}}(t)\vert \,\,\le \beta _{\theta _M } (\,\Delta _M + \gamma (S\,[{\rm {\bf d}},0,t ]),0\,)+\gamma (S\,[{\rm {\bf d}},0,t ] ).
\]
By induction, the latter holds for any integer $k\geq 1$ hence, for all $t\ge t_1 $. Using the ``triangle'' inequality $\beta_{\theta_M}(a+b,0)\leq \beta_{\theta_M}(2a,0) + \beta_{\theta_M}(2b,0)$ and the bound \rref{bndy:t0t1} we obtain the estimate \rref{UGSbound}.

\noindent {\em Proof of (ii).} We derive next a bound for the output trajectories which leads to \rref{finalbnd}. Following the definition of the supervisor \rref{eq3} we have \rref{bndy:t0t1}. 
According with \rref{eq3}, $t_1$ is the first switching time \ie, when the norm of the output trajectory belongs to the interval $[\Delta _{i(t_1 )} ,\chi _{\theta _{i(t_1 )} } (\Delta _{i(t_1 )+1} ))$ with  $ i(t_1 )\in [0,M]$ and $i(t_1 )\ne i(0)$. If $t_1=+\infty$ the proof ends noting that \rref{bndy:t0t1} implies \rref{finalbnd}.

Otherwise, if $t_1< +\infty$ by continuity of solutions, of $\bfh$, forward completeness, and the definition of the dwell-time function $T_i(t_1)$ --\cf Assumption \ref{ass2}, we have 
\begin{equation}
 \label{bndy:t1}
\vert {{\bfy}}(t_1 )\vert \le \beta _{\theta _r } (\vert {{\bfh}}({{\bfx}}_0 )\vert ,T_{\min } )+\gamma (S\,[{\rm {\bf d}},0,t_1 ])\,.
  \end{equation}

Under similar arguments let $t_2$ be the second switching instant, that is the first time such that $|\bfy(t)|\in[\Delta_{i(t_2 )} ,\chi _{\theta _{i(t_2 )} } (\Delta _{i(t_2 )+1}))$ with $0\le i(t_2 )\le M$ and $i(t_2 )\ne i(t_1 )$. We have 
  \begin{equation}
\label{bndy:t1t2}
\vert {{\bfy}}(t)\vert \le \beta 
_{\theta_{i(t_1)} } (|\bfy(t_1)|,t-t_1)+\gamma (S\,[{\rm {\bf d}},t_1,t_2])\quad \forall\, t\in [t_1,t_2)
  \end{equation}
and, if $t_2 < +\infty$,
\begin{equation}
 \label{bndy:t2}
\vert {{\bfy}}(t_2)\vert \le \beta _{\theta_{i(t_1)}} (|\bfy(t_1)|,T_{\min}) + \gamma (S\,[{\rm {\bf d}},t_1,t_2])\,.
  \end{equation}
Using \rref{bndy:t1t2} in \rref{bndy:t1} we obtain 
\begin{eqnarray}
\label{bnd:t0t2}
  \vert {{\bfy}}(t)\vert &\le& \beta_{\theta_{i(t_1)} } \big(\beta _{\theta _r } (\vert {{\bfh}}({{\bfx}}_0 )\vert ,T_{\min } )+\gamma (S\,[{\rm {\bf d}},0,t_1 ])\,,t-t_1\big)+\gamma (S\,[{\rm {\bf d}},t_1,t_2])\quad \forall\, t\in [t_1,t_2)\\
  \vert {{\bfy}}(t_2)\vert &\le& \beta_{\theta_{i(t_1)} } \big(\beta _{\theta _r } (\vert {{\bfh}}({{\bfx}}_0 )\vert ,T_{\min} )+\gamma (S\,[{\rm {\bf d}},0,t_1 ])\,,T_{\min}\big)+\gamma (S\,[{\rm {\bf d}},t_1,t_2])\,.
\end{eqnarray}
Similar arguments hold for any $k\geq 1$ and all $t\in [t_{k-1}, t_k)$ \ie, 
\begin{eqnarray}
  \label{bndy:t0tk}
\vert {{\bfy}}(t)\vert & \le & \beta_{\theta_{i(t_{k-1})} } \big(\beta_{\theta_{i(t_{k-2})} }\circ\big( \cdots\, \circ\big(\vartheta_0^{\scriptsize\bfd}(|\bfh(x_0)|),T_{\min}\big)+\cdots + \gamma (S\,[{\rm {\bf d}},t_{k-2},t_{k-1}])\,\big), t-t_{k-1}\big)\nonumber \\ && \quad +\gamma (S\,[{\rm {\bf d}},t_{k-1},t_{k}])\\
\vartheta_0^{\scriptsize\bfd}(s) &:= & \beta _{\theta _r } (s,T_{\min})+\gamma (S\,[{\rm {\bf d}},0,t_1 ])\,.
\end{eqnarray}
By assumption there exists $N<\infty$ such that $\{t_k\}\to t_N < \infty$ and $t_{N+1} := +\infty$; from this and \rref{bndy:t0tk} it follows that, for all $t\in[t_k, \infty)$,  
\begin{eqnarray}
  \label{tildebeta}
\vert {{\bfy}}(t)\vert & \le & \widetilde\beta^{\scriptsize \bfd}(\vert {{\bfh}}({{\bfx}}_0 )\vert,t-t_k) 
+\gamma (S\,[{\rm {\bf d}},t_{k},\infty])\ \ \ \mbox{}\\
  \label{bndy:tkinfty}
\tilde\beta^{\scriptsize \bfd}(s,r) & := & \beta_{\theta_{i(t_{k})} } \big(\beta_{\theta_{i(t_{k-1})}}\overbrace{\circ\big( \cdots\, \circ}^{N \mbox{\footnotesize \ times}}\big(\vartheta_0^{\scriptsize\bfd}(s),T_{\min}\big)+\cdots + \gamma (S\,[{\rm {\bf d}},0,\infty])\,\big), r\big)\,.
\end{eqnarray}
The function  $\widetilde\beta^{\scriptsize \bfd}(\cdot,r)$ is strictly increasing for each fixed $r$ and $\bar\beta^{\scriptsize \bfd}(s,\cdot)$ is strictly decreasing to zero for each fixed $s$.

Define $\kappa^{\scriptsize \bfd}(s)$ as the right-hand side of \rref{UGSbound:a} \ie, $\kappa^{\scriptsize \bfd}(s):= \bar\beta(\max\{2\Delta_M, s\}, 0)  + \bar\gamma(\S[{\rm {\bf d}},0,\infty])$. Then, $\bar\beta^{\scriptsize \bfd}$ may be defined as any continuous function satisfying the following: $\bar\beta^{\scriptsize \bfd}(\cdot,r)$ is strictly increasing for each $r\geq0$; $\bar\beta^{\scriptsize \bfd}(s,\cdot)$ is strictly decreasing for each $s\geq 0$;
\begin{eqnarray*}
  \bar\beta^{\scriptsize \bfd}(s,t_N) & = &  \kappa^{\scriptsize \bfd}(s);\\
  \bar\beta^{\scriptsize \bfd}(s,t) & \geq &  \widetilde\beta^{\scriptsize \bfd}(s,t-t_N) \quad \forall\, t\geq t_N;\\
    \bar\beta^{\scriptsize \bfd}(s,t) & :=& \beta^*(s,t) \in\cK\cL \quad \forall \, t\in [0,t_N]
\end{eqnarray*}
such that $ \beta^*(s,t_N)= \kappa^{\scriptsize \bfd}(s) $.

\noindent {\em Proof of (iii).} Finally, consider $\bfd\equiv 0$ and let the initial conditions be $i(0)=r$ with $r\in [0,\, M]$ where $| {{\bfh}}({{\bf x}}(0))| \in [\Delta _r,\, \Delta _{r+1})$. If $ \vert {{\bfh}}({{\bf x}}(0))\vert \in [\chi _{\theta _r } (\Delta _{r+1} ), \Delta _{r+1}) $, the output trajectory $|\bfy(t)|$ may, in general, reach the interval $[\Delta _{r+1} ,\chi _{\theta _{r+1} } (\Delta _{r+2} ))$. On the other hand, if $\vert {{\bfh}}({{\bf x}}(0))\vert \in [\Delta _r ,\chi _{\theta _r } (\Delta _{r+1} ))$ for any $r$ we have, by assumption, $|\bfy(t)|\leq \beta_{\theta_r}(|\bfy(0)|,0) $ which implies that $|\bfy(t)|<\Delta _{r+1}$ \ie, the output trajectory (in norm) cannot reach higher intervals than $[\Delta _{r+1} ,\chi _{\theta _{r+1} } (\Delta _{r+2} ))$ and may only decrease for at least $T_{\theta_r}$ units of time. Therefore, there exists $t'$ such that the system's output trajectories reach the lower next interval $[\Delta _{r-1} ,\chi _{\theta _{r-1} } (\Delta _r ))$. Repeating the reasoning we conclude that $|\bfy(t)|$ may only continue decreasing to zero (right to left on the real line --\cf Figure \ref{fig1}) which implies that the system \rref{eq1} can undergo only a finite number of switches over $[0,\infty)$. The proof ends.  
\end{proof}

Thus system (\ref{eq2}), (\ref{eq3}) has bounded output trajectories for any bounded disturbances and that the system's output converges to zero in the case of that there are no disturbances. Opposite to the case where stability of the switched system follows for large enough values of dwell-time, here dwell-time $T_{\min } >0$ can be chosen arbitrarily small provided that there is a finite number of switches over any finite time interval.

To take into account IOS systems (\cf Definition \ref{Definition1}) one needs additional requirements on its detectability properties. For example, in \cite{JIATEEPRA} it was proved that IOS systems with input-output-to-state stability property (a variant of robust detectability property) is input-to-state stable. In terms of the $\S$ norm previously defined we have the following propositions which follow as corollaries of Theorem \ref{Theorem1}.

\begin{corollary}
\label{Corollary1} {Let all conditions of Theorem \ref{Theorem1} be satisfied and let $T_q$ satisfy}
\begin{equation}
\label{eq5}
\beta _{\theta _0 } (\Delta _1 ,T_0 (s))=0.5\,\Delta _1 ,
\quad
s\in [\,\Delta _0 ,\Delta _1 );
\end{equation}
\begin{equation}\label{seis}
  \beta _{\theta _q } (s,T_q (s))=\chi _{\theta _{q-1} } (\Delta _q ),
\quad
s\in [\,\Delta _q ,\Delta _{q+1} ),\quad 0<q\le M\,.
\end{equation}
{Then, the time} $T_{0.5\,\Delta _1 } $ {that is required for $|\bfy(t)|$ to converging into the set }$\{\vert {\rm {\bfy}}\vert \,\,\le 0.5\,\Delta 
_1\} $ {for the case }${\rm {\bf d}}(t)\equiv 0$ {can be estimated as follows (from (\ref{eq3}) }$i(0)=r)$:
\begin{equation}
\label{eq6}
T_{0.5\,\Delta _1 } \le \sum\limits_{k=0}^r {T_k (\Delta _{k+1} )} .
\end{equation}
{If} $\beta _{\theta _q } (s,T_q (s))+\gamma (D_q )=\chi _{\theta 
_{q-1} } (\Delta _q )$, $0<\gamma (D_q )<\chi _{\theta _{q-1} } 
(\Delta _q )$, $s\in [\,\Delta _q ,\Delta _{q+1} )$ {for} $0<q\le M$ {and }$\beta 
_{\theta _0 } (\Delta _1 ,T_0 (s))+\gamma (D_0 )=0.5\,\Delta _1 
$, $0<\gamma (D_0 )<0.5\,\Delta _1 $, $s\in [\,\Delta _0 ,\Delta _1 
)$, {then estimate (\ref{eq6}) holds under disturbances} ${\rm {\bf d}}\in M_{\mR^m} ${ such that}
    \begin{equation}
      \vert {\rm {\bfy}}(t)\vert \in [\,\Delta _q ,\Delta _{q+1} ),
\quad
q\in [0, M],\ t\in [\,t_1^q ,t_2^q ) \quad \Rightarrow  \quad S\,[\,{\rm {\bf d}},t_1^q ,t_2^q 
]\le D_q\,
    \end{equation}
\end{corollary}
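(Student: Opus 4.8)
The plan is to reduce \rref{eq6} to a sum of per-regime ``crossing times'' and to exploit the monotone descent of the output established in part (iii) of Theorem \ref{Theorem1}. Recall that for $\bfd\equiv 0$ and $i(0)=r$ that result shows $|\bfy(t)|$ can only decrease to zero ``right to left'' along the partition, the active systems being engaged in strictly decreasing index order $\theta_r,\theta_{r-1},\dots,\theta_0$ (some indices possibly skipped) and each index used at most once. It therefore suffices to bound the duration of each active regime and to add these bounds; the convergence time $T_{0.5\Delta_1}$ is no larger than the total.

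First I would fix $q\in\{1,\dots,r\}$, let $t_j$ be the instant at which $\theta_q$ becomes active, and set $s:=|\bfy(t_j)|<\Delta_{q+1}$. With $\bfd\equiv0$, Assumption \ref{ass1} gives the envelope $|\bfy(t)|\le\beta_{\theta_q}(s,t-t_j)$, which is strictly decreasing in $t$; evaluating at $t-t_j=T_q(s)$ and using \rref{seis}, i.e.\ $\beta_{\theta_q}(s,T_q(s))=\chi_{\theta_{q-1}}(\Delta_q)$, shows that once $T_q(s)$ has elapsed the output has dropped into the switching subinterval $[\Delta_{q-1},\chi_{\theta_{q-1}}(\Delta_q))$ or a lower one, which triggers a downward switch. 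Since the right-hand side of \rref{seis} does not depend on $s$ while $\beta_{\theta_q}$ is increasing in its first and decreasing in its second argument, $T_q$ is nondecreasing in $s$; hence the duration of the regime is at most $T_q(s)\le T_q(\Delta_{q+1})$. For the terminal regime $q=0$ I would use \rref{eq5}: upon entry $s\le\chi_{\theta_0}(\Delta_1)<\Delta_1$, so $|\bfy(t_j+T_0)|\le\beta_{\theta_0}(s,T_0)\le\beta_{\theta_0}(\Delta_1,T_0)=0.5\Delta_1$, i.e.\ the set $\{|\bfy|\le0.5\Delta_1\}$ is reached within $T_0=T_0(\Delta_1)$. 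Summing over $q=r,r-1,\dots,0$ yields $T_{0.5\Delta_1}\le\sum_{k=0}^{r}T_k(\Delta_{k+1})$, which is \rref{eq6}.

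For the second assertion I would run the identical argument while retaining the disturbance term. On the active regime $q$, Assumption \ref{ass1} gives $|\bfy(t)|\le\beta_{\theta_q}(s,t-t_j)+\gamma(\S[\bfd,t_j,t])$; by hypothesis $\S[\bfd,t_1^q,t_2^q]\le D_q$ on each epoch and, as $\S[\bfd,t_j,\cdot]$ is nondecreasing in its upper limit, $\gamma(\S[\bfd,t_j,t])\le\gamma(D_q)$ throughout. Evaluating at $t-t_j=T_q(s)$ and invoking the modified relation $\beta_{\theta_q}(s,T_q(s))+\gamma(D_q)=\chi_{\theta_{q-1}}(\Delta_q)$ (respectively $\beta_{\theta_0}(\Delta_1,T_0)+\gamma(D_0)=0.5\Delta_1$) reproduces exactly the earlier conclusion---a downward switch, resp.\ entry into the target set, within $T_q(\Delta_{q+1})$---so \rref{eq6} follows again by summation. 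The constraints $0<\gamma(D_q)<\chi_{\theta_{q-1}}(\Delta_q)$ (and $0<\gamma(D_0)<0.5\Delta_1$) are precisely what guarantee that the perturbed envelope still falls below $\chi_{\theta_{q-1}}(\Delta_q)$ (resp.\ $0.5\Delta_1$), so the switch is indeed forced.

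The step I expect to require the most care is the per-regime time bound. Two points must be watched. First, it is the \emph{envelope} $\beta_{\theta_q}(s,\cdot)$, not the raw output, that decreases monotonically; the raw output may transiently rise (even into the band above) without provoking a switch, since $\theta_q$ remains active until its dwell-and-switching condition fires, so the bound $T_q(\Delta_{q+1})$ on the regime duration is unaffected. Second, one must justify taking $T_q(\Delta_{q+1})$ as the worst case: this rests on the monotonicity of $s\mapsto T_q(s)$ noted above, with $T_q(\Delta_{q+1})$ read as the (finite, by the boundedness in Assumption \ref{ass2}) supremal value. Because each regime index is engaged at most once in the monotone descent, any bands the output skips during a dwell interval only shorten the convergence, so the full sum $\sum_{k=0}^{r}T_k(\Delta_{k+1})$ remains a valid upper bound.
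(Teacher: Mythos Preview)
Your proposal is correct and follows essentially the same approach as the paper: the paper's own proof is a single sentence invoking the supervisor \rref{eq3}, the definitions \rref{eq5}--\rref{seis} of $T_q$, and Theorem \ref{Theorem1}, and your argument simply spells out these ingredients --- the monotone ``right-to-left'' descent from part (iii), the per-regime envelope bound $\beta_{\theta_q}(s,T_q(s))=\chi_{\theta_{q-1}}(\Delta_q)$, the monotonicity of $T_q$ in $s$, and the summation --- in the detail the paper omits. Your handling of the disturbed case via the modified dwell-time relations is likewise what the paper intends.
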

\begin{proof}
It follows using Equations (\ref{eq3}) the definition of $T_q $ and invoking Theorem \ref{Theorem1}.\end{proof}

The relation \rref{seis} defined $T_q(\cdot)$ implicitly; for instance, for $\beta_{\theta_q}:=kse^{-r}$ we obtain $T_q(s):=\ln(ks)-\ln(\chi _{\theta _{q-1} } (\Delta _q ))$ while for $\beta(s,T_q(s))\propto 1/T_q(s)$ we obtain $T_q(s)\propto s$. Estimate (\ref{eq6}) provides upper estimation on finite time of practical stabilization of the system with respect to set where $\vert {\rm {\bf y}}(t)\vert \,\,\le 0.5\,\Delta _1 $.

\begin{corollary}
  \label{Corollary2} {Let all conditions of Theorem \ref{Theorem1} be satisfied and expressions (\ref{eq5}), (\ref{seis}) be valid. Assume that, for all }$q\in (0,\, M]$ {and} $s\in [\,\Delta _q ,\Delta _{q+1} )$, there exists 
$ \,{t}'\in [\,T_q (s),T_q (s)+T_{q-1} (\chi _{\theta _{q-1} 
} (\Delta _q )))$
such that 
$\beta _{\theta _q } (s,t)\ge \beta _{\theta _{q-1} } (\chi _{\theta 
_{q-1} } (\Delta _q ),t-T_q (s))$ {for all} $t\in [\,{t}',T_q 
(s)+T_{q-1} \{\,\chi _{\theta _{q-1} } (\Delta _q )\,\})$.

\noindent{Then,} { for the case }${\rm {\bf d}}(t)\equiv 0$ { the output of the system (\ref{eq2}), (\ref{eq3}) has the shortest time of convergence to zero compared to that of any other system }$\theta _q ${ from the family (\ref{eq1}), with initial output values in }$[\,\Delta _q ,\Delta _{q+1} )$.
\end{corollary}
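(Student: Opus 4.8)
The plan is to fix an index $q$ and an initial output value $s_0:=\vert\bfh(\bfx_0)\vert\in[\Delta_q,\Delta_{q+1})$, set $\bfd\equiv 0$, and compare the switched system \rref{eq2},\rref{eq3} against the single fixed system $\theta_q$ run from the same initial condition. By Theorem \ref{Theorem1}(iii) the switched system undergoes finitely many switches and its output converges to zero under the bound $\bar\beta^{\bf 0}(s_0,\cdot)\in\cKL$. In the absence of disturbances the supervisor produces a monotone cascade: $\theta_q$ is active on $[0,\tau_1)$ with $\tau_1=T_q(s_0)$, at which instant $\vert\bfy(\tau_1)\vert=\chi_{\theta_{q-1}}(\Delta_q)$ and the mode switches to $\theta_{q-1}$, then to $\theta_{q-2}$ after a further $T_{q-1}(\chi_{\theta_{q-1}}(\Delta_q))$ units of time, and so on down to $\theta_0$. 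On each window $[\tau_k,\tau_{k+1})$ the output bound is $\beta_{\theta_{q-k}}(\cdot,t-\tau_k)$ issued from the corresponding switch value (Assumption \ref{ass1}), and by relations \rref{eq5}--\rref{seis} the switch values are precisely the thresholds $\chi_{\theta_{q-k-1}}(\Delta_{q-k})$ reached at the ends of the dwell times.

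First I would establish the one-step comparison. On $[0,\tau_1)$ the switched and the fixed-$\theta_q$ bounds coincide, both equal to $\beta_{\theta_q}(s_0,t)$, and both reach $\chi_{\theta_{q-1}}(\Delta_q)$ at $\tau_1$. For $t\ge\tau_1$ the fixed system continues as $\beta_{\theta_q}(s_0,t)$, whereas the switched system follows $\beta_{\theta_{q-1}}(\chi_{\theta_{q-1}}(\Delta_q),t-\tau_1)$; the standing hypothesis of the Corollary furnishes an instant $t'$ past which the latter is dominated by the former on the whole window up to $\tau_2=\tau_1+T_{q-1}(\chi_{\theta_{q-1}}(\Delta_q))$. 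The hypothesis is needed precisely because, right after the switch, the SIIOS overshoot $\beta_{\theta_{q-1}}(\chi_{\theta_{q-1}}(\Delta_q),0)=\Delta_q$ temporarily exceeds the fixed-$\theta_q$ value $\chi_{\theta_{q-1}}(\Delta_q)$, so the domination can only be asserted after a transient of length $t'-\tau_1$.

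The main step is to propagate this domination across all switches by induction on $k$. Assuming the switched bound lies below $\beta_{\theta_q}(s_0,\cdot)$ on $[t',\tau_{k+1})$, I would show it remains so on $[\tau_{k+1},\tau_{k+2})$ by applying the Corollary's hypothesis at level $q-k$ (which makes the newly activated mode $\theta_{q-k-1}$ dominate the continuation of the previous mode $\theta_{q-k}$ on the next window) and then invoking the induction hypothesis together with the monotonicity of $\beta_{\theta_q}(s_0,\cdot)$ in its second argument. Chaining these inequalities through the finitely many switches guaranteed by Theorem \ref{Theorem1}(iii) yields $\vert\bfy(t)\vert\le\beta_{\theta_q}(s_0,t)$ for all $t$ beyond the first transient, whence the time needed by the switched system to enter any sublevel set $\{\vert\bfy\vert\le\varepsilon\}$ --in particular the set $\{\vert\bfy\vert\le 0.5\Delta_1\}$ of Corollary \ref{Corollary1} and, in the limit, zero-- is no larger than that of the single system $\theta_q$. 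Since $\theta_q$ is the locally optimal mode on $[\Delta_q,\Delta_{q+1})$, this establishes the claimed shortest convergence time relative to each fixed member of the family \rref{eq1}.

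The hard part will be the bookkeeping in the inductive step: the comparisons supplied by the hypothesis are between \emph{consecutive} modes (each issued from its own switch value and shifted by its own dwell time), whereas the target inequality is against the single, unshifted reference $\beta_{\theta_q}(s_0,\cdot)$. Making the transitive chain rigorous requires carefully tracking the time shifts $\tau_k$, handling the non-monotone overshoot of the switched bound at each switch, and verifying that the thresholds $t'$ accumulated at successive switches do not compromise the domination on the interior of each window; the class-$\cKL$ regularity of the $\beta_{\theta_j}$ and the separation-from-zero of the dwell times ($T_{\min}>0$) are what keep this chain finite and well posed.
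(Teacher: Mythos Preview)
Your approach is essentially the same as the paper's: read the hypothesis as saying that, once the output bound has decayed to $\chi_{\theta_{q-1}}(\Delta_q)$, mode $\theta_{q-1}$ reaches the next threshold $\chi_{\theta_{q-2}}(\Delta_{q-1})$ no later than continuing with $\theta_q$, and then repeat this one-step comparison down the cascade. The paper's own proof is in fact just that one sentence --- the single-step comparison you give in your second paragraph --- and leaves the inductive/transitive extension entirely implicit; so you are not missing an idea, you are supplying detail the paper omits.

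One remark on framing. The paper phrases the comparison in terms of \emph{threshold hitting times} (``system $q{-}1$ reaches $\chi_{\theta_{q-2}}(\Delta_{q-1})$ faster than the $q$th system'') rather than pointwise domination of the $\cKL$ envelopes. That viewpoint sidesteps part of the bookkeeping you flag: from the hypothesis at level $q$ you get, at $t\to\tau_2^-$, that $\beta_{\theta_q}(s_0,\tau_2)\ge \chi_{\theta_{q-2}}(\Delta_{q-1})$, i.e.\ the fixed $\theta_q$ has not yet reached the next threshold when the switched system has. Iterating this ``hitting-time'' inequality over successive thresholds is exactly what the paper has in mind, and it avoids having to maintain the pointwise inequality $\vert\bfy(t)\vert\le\beta_{\theta_q}(s_0,t)$ through the transient overshoot at each switch. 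The residual difficulty you identify --- that the hypothesis only covers one window $[t',\tau_{k+2})$ at a time, so chaining against the single reference $\beta_{\theta_q}(s_0,\cdot)$ beyond $\tau_2$ is not immediate --- is real and is not addressed in the paper's one-line argument either; both proofs are, at this level, sketches.
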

\begin{proof}
Conditions of the corollary imply that starting with system $q$ it is necessary to switch on to system $q-1$ when the output reaches for $\chi _{\theta _{q-1} } (\Delta _q )$ since system $q-1$ reaches for $\chi _{\theta _{q-2} } (\Delta _{q-1} )$ faster than the $q$th system. 
\end{proof}

Corollary \ref{Corollary2} establishes  conditions under which the output of system (\ref{eq2}), (\ref{eq3}) has shorter time of convergence than any fixed system from the initial family (\ref{eq1}) over its domain of ``optimality'' $[\Delta_q,\Delta_{q+1})$. The overall performance, that is for all $t$ is sub-optimal in the sense that it depends on the way local intervals --\cf Assumption \ref{ass2}-- and functions $T_q $ are chosen; for instance, different stability and convergence properties may be obtained. The purpose of the following example is twofold: firstly, to illustrate the utility of Corollary \ref{Corollary2} by choosing the partitions determined by $\{\Delta_q\}$ and secondly, to provide an insight of the difficulty to obtain ``true'' optimality in the choice of the latter intervals and associated dwell-times. 
  \begin{example}
    Consider a switched system of the form (\ref{eq2}), (\ref{eq3}) with $\I=\{\,0,1\,\}$,  \ie, we have only two dynamic modes, each of which is stable in the sense of Definition \ref{Definition2} with functions $\beta_0$ and $\beta_1$. The goal is to find a set of conditions that lead to an improved rate of convergence --optimality here is understood in that sense. We have $M=2$ and the partition in Assumption \ref{ass2} is ${\mR}_+ =[\,0,\Delta _1 )\cup [\,\Delta _1 ,+\,\infty )$ --\cf Figure \ref{fig1}. The fact leading to optimality (or not) is the appropriate choice of the threshold $\Delta_1$. Since we are looking for the switching rule that minimizes the time of convergence let us introduce the functions $\mathcal T_k :{\mR}_+^2 \to {\mR}_+ $ such, that $\beta _k (s,\mathcal T_k (s,\Delta ))=\Delta $, $s\in {\mR}_+ $, $\Delta \in {\mR}_+ $, $k=0,1$ (it is assumed that $\mathcal T_k (s,\Delta )=0$ for $\beta _k (s,0)\le \Delta)$ \ie, $\mathcal T_k$ is the time that takes for the estimate on the output trajectories of system $k$ to reach a given level $\Delta$. For the level of interest here \ie, $\Delta=\Delta_1$ assume first that $\mathcal T_1 (s,\chi _0 (\Delta _1 ))\le \mathcal T_0 (s,\chi _0 (\Delta _1 ))$ for $s\ge \Delta _1 $; in such case, it is reasonable to ``turn on'' dynamic mode 1 as long as the output trajectory remains within the upper interval $[\,\Delta _1 ,+\,\infty )$ hence, we set $\theta_1=1$ and the system dynamics is defined by $\dot {\bfx}={\rm {\bff}}_1 ({\rm {\bfx}},{\rm {\bf d}})$ on that interval. If on the other hand $\mathcal T_0 (s,0.5\,s)\le \mathcal T_1 (s,0.5\,s)$ for $s<\Delta _1 $ it results reasonable to set $\theta_0=0$ hence, the system's dynamics is given by $\dot {\bfx}={\rm {\bff}}_0 ({\rm {\bfx}},{\rm {\bf d}})$ as long as  $|\bfy(t)|\in[\,0,\Delta _1 )$. After the conditions of Corollary \ref{Corollary2}:
\[
\forall \,\,s\ge \Delta _1 
\quad
\exists \,\,{t}'\ge \mathcal T_1 (s,\chi _0 (\Delta _1 )):
\quad
\beta _1 (s,t)\ge \beta _0 (\chi _0 (\Delta _1 ),t-\mathcal T_1 (s,\chi_0 (\Delta _1 ))\quad
\forall \,\,t\ge {t}'
\]
so the switching condition to change from mode 1: $\dot {\bfx}={\rm {\bf f}}_1 ({\rm {\bfx}},{\rm {\bf d}})$ to mode 0: $\dot {\bfx}={\rm {\bff}}_0 ({\rm {\bfx}},{\rm {\bf d}})$, is that $|\bfy(t)|\in [\,0,\Delta _1 )$.  It is important to remark that these conditions are only sufficient and the constant $\Delta _1 $ is, in general, not the threshold that defines the switching rule which leads to locally optimal performance, in the sense of fastest convergence. However, the fact that there exists a finite $\Delta_1$ satisfying the conditions of Corollary \ref{Corollary2} for this particular situation, means that switched control leads, if not in general to {\em optimal} performance, to an improvement. 

To see farther, let us consider the following optimization problem: for the case of two systems, let $1\gg\varepsilon>0$ be a given tolerance level with respect to which to measure $convergence$ \ie, let us assume that it is required to find the shortest time for the output trajectories to satisfy $|\bfy(t,\bfx_0,0)|\leq \varepsilon$. Then,  
\begin{equation}
\label{deltaep}  \Delta _\varepsilon (s)=\mathop {\arg \min }\limits_{\Delta \le s} 
\{\,T_1 (s,\Delta )+T_0 (\Delta ,\varepsilon )\,\}
\end{equation}
provides the optimal switching threshold we are looking for. If the solution of \rref{deltaep} is constant \ie, if $\Delta_\varepsilon (s)=\widetilde {\Delta }$, then we set $\Delta _1 =\widetilde {\Delta }$; if $\Delta _\varepsilon (s)$ varies with the initial state value then the choice $T_1 (s)=T_1 (s,\Delta _\varepsilon (s))$ ensures that the output trajectory has an optimal convergence to the level $\varepsilon $ for all output trajectories of the switched system (\ref{eq2}), (\ref{eq3}). 
\end{example}
It may be apparent from this discussion that the difficulty of the latter optimization problem increases geometrically with respect to the number of systems, $N$, and partitions, $M$. Yet, satisfactory results may be obtained following sensible considerations on a case-by-case basis as it is further illustrated through particular applications in Section \ref{sec:appl}.

\subsection{Hysteresis supervisor}
\label{sec:hysteresis}
We now trade the dwell-time condition by a hysteresis assumption. Opposite to the previous case in which switches may occur to {\em any} mode provided that a minimal time passes, we now assume that switches occur as soon as the output value (in norm) leaves a determined interval modulo a hysteresis zone to prevent infinite switches over finite intervals. In particular, switching may occur from mode $q$ to modes $q-1$ or $q+1$ only. The hysteresis supervisor is defined as follows:
\begin{subequations}\label{eq7}
  \begin{align}
    t_0 &= 0,\ i(t_0 )=r,\ r\in \{0,\ldots, M\},\ 
 | \bfh(\bfx(t_0)) | \in \left[ \Delta _r,\,\Delta _{r+1}\right); \\
\nonumber t_k' &:= \mathop {\arg \inf }\limits_{t\ge t_j } 
   \left\{\, |\bfh( \bfx(t) ) | \in \left[\Delta _k ,\chi _{\theta _k} (\Delta _{k+1})\,\! \right)\,\right\},\ 
k\in \{\,i(t_j )-1,i(t_j )+1\,\} \\
t_{j+1} & = \min_{ k\in \{\,i(t_j )-1,i(t_j )+1\,\} } \left\{ t_k' \right\};\\
i(t_{j+1})&=k \mbox{ such that } |\bfh(\bfx(t_{j+1}))| \in [\Delta _k ,\chi _{\theta _k } (\Delta _{k+1} ))  \subset \left[\Delta _k ,\Delta _{k+1} \right).
  \end{align}
\end{subequations}
where $t_j $, with $j=1,2,3,...$, are switching times; $j$ is the number of the last switch and the signal $i(t)$ has constant value in the so-called ``hysteresis'' set $\N=\bigcup\limits_{q=0}^{M-1} {[\,\chi _{\theta _q } (\Delta _{q+1} ),\Delta _{q+1} )} $.
  \begin{theorem}
    \label{Theorem2} Let Assumptions \ref{ass1} and \ref{ass2} hold. Then, for the  system (\ref{eq2}) with supervisor (\ref{eq7}), measurable disturbances $\bfd\in M_{\mR^m}$ and initial conditions $t_0 = 0$, $\bfx_0\in\mR^n$, items (i)--(iii) of Theorem \ref{Theorem1} hold{ with 
    \begin{equation}\label{thm2:UGSbound:a}
        |\bfy(t,\bfx_0,{\rm {\bf d}})|  \le  \bar\beta(\max\{\Delta_M, |\bfh(\bfx_0)|\}, 0)  + \gamma(\S[{\rm {\bf d}},0,t])\quad \forall \,t\geq t_0,
    \end{equation}
    instead of \rref{UGSbound:a}. }
  \end{theorem}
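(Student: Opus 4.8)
The plan is to follow, step for step, the proof of Theorem~\ref{Theorem1}, the only structural novelty being that the hysteresis supervisor \rref{eq7} admits transitions to the \emph{adjacent} modes $i(t_j)\pm1$ only; this single feature is what sharpens \rref{UGSbound:a} into \rref{thm2:UGSbound:a}. For forward completeness I would reproduce the argument of item~(i) of Theorem~\ref{Theorem1}: between consecutive switches the dynamics coincides with a forward-complete member of the family \rref{eq1}, the solutions are continuous and can be continued across each switching instant, and the role played there by the dwell time $T_{\min}$ --precluding accumulation of switches-- is here taken over by the hysteresis gaps $[\chi_{\theta_q}(\Delta_{q+1}),\Delta_{q+1})$, whose widths are strictly positive because $\beta_i(s,0)>s$ for $s>0$. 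On any region where $|\bfy|$ stays bounded each such gap is traversed in positive time, so only finitely many switches occur per finite interval and the switched solution is complete.

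For the improved estimate the plan is to keep track of $|\bfy|$ at the switching instants. By continuity of $\bfy(\cdot)$ and of $\bfh$, whenever \rref{eq7} switches \emph{into} a mode $k<M$ the output equals a boundary value of the target interval: $|\bfy(t_j)|=\Delta_k$ for an upward transition (from $k-1$) and $|\bfy(t_j)|=\chi_{\theta_k}(\Delta_{k+1})$ for a downward one (from $k+1$); in either case $|\bfy(t_j)|\le\chi_{\theta_k}(\Delta_{k+1})$. Feeding this into Assumption~\ref{ass1} on $[t_j,t_{j+1})$ and using $\beta_{\theta_k}(\chi_{\theta_k}(\Delta_{k+1}),0)=\Delta_{k+1}$ gives $|\bfy(t)|\le\Delta_{k+1}+\gamma(\S[\bfd,0,t])\le\Delta_M+\gamma(\S[\bfd,0,t])$ on that interval. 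The decisive observation concerns the unbounded top interval: since transitions are to adjacent modes only, the supervisor can enter mode $M$ \emph{exclusively} by crossing $\Delta_M$ upward from mode $M-1$, so $|\bfy(t_k)|=\Delta_M$ \emph{exactly} --- in sharp contrast with the dwell-time estimate \rref{bndy:tjtj+1}, \emph{no} disturbance term inflates this value. Hence $|\bfy(t)|\le\beta_{\theta_M}(\Delta_M,0)+\gamma(\S[\bfd,0,t])$ there, and combining with the initial interval (where $|\bfy(t)|\le\beta_{\theta_r}(|\bfh(\bfx_0)|,0)+\gamma(\S[\bfd,0,t])$) together with $\beta_{\theta_M}(\cdot,0)\le\bar\beta(\cdot,0)$ yields \rref{thm2:UGSbound:a}. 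It is precisely the disappearance of the triangle-inequality splitting used in Theorem~\ref{Theorem1} that removes the factor~$2$ in front of $\Delta_M$ and replaces $\bar\gamma$ by $\gamma$.

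Items~(ii) and~(iii) I would then obtain essentially verbatim from their counterparts in Theorem~\ref{Theorem1}. For~(ii), under the hypothesis $\{t_k\}\to t_N<\infty$, iterating the per-interval SIIOS estimates yields the same finite composition of class-$\cKL$ maps as in \rref{bndy:tkinfty}, which is spliced to $\kappa^{\scriptsize\bfd}$ on $[0,t_N]$ to produce $\bar\beta^{\scriptsize\bfd}$ and the bound \rref{finalbnd}; adjacency merely restricts which indices $\theta_{i(t_j)}$ appear and does not alter the construction. For~(iii), with $\bfd\equiv0$ the SIIOS estimates are purely decaying, so after any downward transition into a mode $k$ the output obeys $|\bfy(t)|\le\beta_{\theta_k}(\chi_{\theta_k}(\Delta_{k+1}),0)=\Delta_{k+1}$ and is therefore confined below the threshold it just crossed, and can never return to the mode it left; since the index lives in the finite set $\{0,\dots,M\}$, the trajectory admits at most an initial monotone ascent to a peak mode followed by a monotone descent, whence finitely many switches and $|\bfy(t)|\to0$, which gives a class-$\cKL$ bound \rref{d0convergencebound}.

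The step I expect to be the main obstacle is the rigorous justification of forward completeness in item~(i). As Remark~\ref{rmk2} warns, the hysteresis set $\N$ may be non-compact, so the a~priori boundedness of $|\bfy|$ established above does \emph{not} by itself exclude the state escaping to infinity while the output chatters within $\N$; ruling out such a Zeno/finite-escape scenario is genuinely more delicate than in the dwell-time case and is the one place where the adjacency structure and the output estimate must be combined with care (or an additional detectability hypothesis invoked) rather than simply transcribed from Theorem~\ref{Theorem1}.
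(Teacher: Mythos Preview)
Your proposal is correct and follows essentially the same route as the paper, including the key observation that adjacency forces $|\bfy(t_k)|=\Delta_M$ \emph{exactly} upon entry into the top mode (or else $k=0$ and $|\bfy(t_k)|=|\bfh(\bfx_0)|$), which is precisely what removes the factor~$2$ and replaces $\bar\gamma$ by $\gamma$; items~(ii) and~(iii) are likewise carried over from Theorem~\ref{Theorem1} with $T_{\min}$ replaced by~$0$. On the forward-completeness point you flag as the obstacle: the paper does \emph{not} invoke any detectability hypothesis but instead argues that on each $[t_j,t_{j+1})$ forward completeness of the single active subsystem bounds the \emph{state} by some $X_j$, whence $F_j=\sup\{\,|\bff_{\theta_{i(t_j)}}(\bfx,\bfd)|:\bfx\in\N,\,|\bfx|\le X_j,\,|\bfd|\le D_j\}<\infty$, so the output cannot cross the positive-width hysteresis gap instantaneously and $t_{j+1}>t_j$ --- the paper takes this as establishing right-continuity of $i(\cdot)$ and forward completeness, and does not address the Zeno scenario you raise any more rigorously than that.
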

\begin{remark}\label{rmk50}
It is important to stress that in the previous statement Assumption \ref{ass1} needs $not$ to hold $globally$ \ie, each of the systems in \rref{eq2} is not required to be SIIOS for all initial states and all measurable disturbances. It is sufficient that each system $\theta_q$ is SIIOS for initial states in the set corresponding to output values where the system is to be active. To illustrate this, consider Figure \ref{fig1} and system $\theta_1$; for the hysteresis supervisor it is enough that the system $\theta_1$ be SIIOS for all $\bfx_0$ such that $|\bfh(\bfx_0)|\in [\Delta_0, \Delta_1] \cup [\Delta_2, \Delta_3]$. This is of obvious interest if system $\theta_1$ corresponds to a plant in closed loop with a controller which guarantees stability in the large (\ie, in a ``large'' specified domain of attraction, subset of $\mR^n$) or even locally. In Section \ref{sec:lorenz} we present a case-study presenting such characteristic. 
  \end{remark}
  \begin{proof}[of Theorem \ref{Theorem2}]
From (\ref{eq7}) $i(t_0 )=r$, $t_0 =0$ where $\vert {\rm {\bfh}}({\rm {\bfx}}_0)\vert \in [\Delta _r, \,\Delta _{r+1}) $, $ r\in [0,\, M]$. According to Assumption \ref{ass1} system $\theta _{r} $ from family (\ref{eq1}) is forward complete so estimate
\[
\vert {\rm {\bfy}}(t,{\rm {\bfx}}_0 ,{\rm {\bf d}})\vert \,\,\le \beta 
_{\theta _{i(t_0 )} } (\vert {\rm {\bfh}}({\rm {\bfx}}_0 )\vert 
,t)+\gamma (S\,[\,{\rm {\bf d}},0,t_1 ])
\]
holds also for the trajectories of system \rref{eq2}, \rref{eq7} over $t\in [\,t_0 ,t_1 )$. From (\ref{eq7}) the time instant $t_1 $ is the time instant when the output trajectory (in norm) enters in the interval $[\,\Delta _{i(t_1 )} ,\chi _{\theta _{i(t_1 )} } (\Delta _{i(t_1 )+1} ))$ with $i(t_1 )\in \{\,\theta _{i(t_0 )} -1,\theta _{i(t_0)} +1\,\}$. Assume that $t_1 <+\,\infty $ (if $t_1 $ is infinite, then the system is clearly forward complete and moreover the bound \rref{finalbnd} holds) then from forward completeness of $\dot \bfx = \bff_{\theta_r}(\bfx,\bfd)$ and considering ${\rm {\bf d}}\in M_{\mR^m}$ we obtain that there exist $X_0 \in {\mR}_+ $ and $D_0 \in {\mR}_+ $ such that the solutions of \rref{eq2}, \rref{eq7} are defined over $[t_0,t_1)$ and, moreover, $\vert \vert {\rm {\bfx}}\vert \vert _{[\,t_0 ,t_1 )} \,\,\le X_0 $, $\vert \vert {\rm {\bf d}}\vert \vert _{[\,t_0 ,t_1 )} \,\,\le D_0 $. Hence, 
\[
F_0 =\mathop {\sup }\limits_{{\rm {\bfx}}\in N,\,\vert {\rm {\bfx}}\vert 
\,\,\le X_0 ,\,\vert {\rm {\bf d}}\vert \,\,\le D_0 } \vert {\rm {\bf 
f}}_{\theta _{i(t_0 )} } ({\rm {\bfx}},{\rm {\bf d}})\vert \ < \ \infty
\]
which implies that for $|\bfy(t)|$ generated by system $\theta_{i(t_0 )}$ to reach adjoining intervals $\{\,[\,\Delta _{\theta _{i(t_0)} -1},\,\Delta _{\theta _{i(t_0 )} } )$ and $[\,\Delta _{\theta _{i(t_0 )} +1} ,\Delta _{\theta _{i(t_0 )} +2} )\,\}$ it is necessary a time proportional to the maximum ``speed'' $F_0$ and 
$t_1 - t_0$ units of time. Moreover, since $F_0<\infty$ necessarily $t_1>t_0$ therefore, the solutions of system (\ref{eq2}), (\ref{eq7}) are defined and are continuous over $[t_0,t_1]$. Reconsidering the initial time to be $t_1$ and \rref{eq2}, \rref{eq7} we obtain that the solutions of the latter are defined for all $t\in [\,t_1 ,t_2)$ and satisfy
\[
\vert {\rm {\bfy}}(t,\bfx(t_1),t_1)\vert \,\,\le \beta _{\theta _{i(t_1 )} } 
(\vert {\rm {\bfy}}(t_1 )\vert ,t-t_1)+\gamma (S\,[\,{\rm {\bf d}},t_1 ,t_2 ])
\]
and there exists finite numbers $X_1 \in {\mR}_+ $, $D_1 \in {\mR}_+ $ and $F_1 \in {\mR}_+ $ such that
\[
\vert \vert {\rm {\bfx}}\vert \vert _{[\,t_1 ,t_2 )} \,\,\le X_1 ,
\quad
\vert \vert {\rm {\bf d}}\vert \vert _{[\,t_1 ,t_2 )} \,\,\le D_1 ,
\quad
F_1 =\mathop {\sup }\limits_{{\rm {\bfx}}\in \mathcal N,\,\vert {\rm {\bfx}}\vert 
\,\,\le X_1 ,\,\vert {\rm {\bf d}}\vert \,\,\le D_1 } \vert {\rm {\bf 
f}}_{\theta _{i(t_1 )} } ({\rm {\bfx}},{\rm {\bf d}})\vert 
\]
hence $t_2 >t_1 $. Repeating these arguments for arbitrary $j>0$ it is 
possible to prove the existence of $X_j \in {\mR}_+ $, $D_j \in {\mR}_+ $ and $F_j 
\in {\mR}_+ $ with properties
\[
\vert \vert {\rm {\bfx}}\vert \vert _{[\,t_j ,t_{j+1} )} \,\,\le X_j ,
\quad
\vert \vert {\rm {\bf d}}\vert \vert _{[\,t_j ,t_{j+1} )} \,\,\le D_j ,
\quad
F_j =\mathop {\sup }\limits_{{\rm {\bfx}}\in \mathcal N,\,\vert {\rm {\bfx}}\vert 
\,\,\le X_j ,\,\vert {\rm {\bf d}}\vert \,\,\le D_j } \vert {\rm {\bf 
f}}_{\theta _{i(t_j )} } ({\rm {\bfx}},{\rm {\bf d}})\vert ,
\]
so $t_{j+1} >t_j $. This implies right continuity of the switching signal $i(t)$ and forward completeness of the system (\ref{eq2}), (\ref{eq7}).

The rest of the proof follows along similar lines as the proof of Theorem \ref{Theorem1}{: the estimates \rref{bndy:tjtj+1} and \rref{bndy:tj} continue to hold {\em verbatim} and if at $t_k$ we have $|\bfy(t_k)|\in [\Delta_M,\Delta_{M+1})$ then \rref{bndy:tM} holds true. In view of \rref{eq7} we have either $|\bfy(t_k)| = |\bfy(t_0)|$ and $k=0$ or $|\bfy(t_k)|=\Delta_M$ (this is because according to the definition of the hysteresis supervisor, there is no dwell time); the bound \rref{thm2:UGSbound:a} follows. The proof of   statement (ii) follows as in Theorem \ref{Theorem1} by replacing $T_{\min}$ with $0$; finally, the proof of (iii) follows the same arguments as Theorem \ref{Theorem1}.}
  \end{proof}
  \begin{corollary}
    \label{Corollary3} {Let all conditions of Theorem \ref{Theorem2} hold and let $T_q $, for $0\le q\le M$, be defined by (\ref{eq5}), (\ref{seis})}. {Then, the time} $T_{0.5\,\Delta _1 } $ {that is required for $|\bfy(t)|$ to converge to the set }$\{\vert {\rm {\bfy}}\vert \,\,\le 0.5\,\Delta _1\} $ {for the case }${\rm {\bf d}}(t)\equiv 0$, {satisfies}
\[
T_{0.5\,\Delta _1 } \le T_r (\Delta _{r+1} )+T_{r+1} (\Delta _{r+1} 
)+\sum\limits_{k=0}^r {T_k (\Delta _{k+1} )} \,.
\]
  \end{corollary}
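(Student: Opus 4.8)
The plan is to treat Corollary~\ref{Corollary3} as the hysteresis counterpart of Corollary~\ref{Corollary1}, the only genuinely new feature being that the hysteresis supervisor~\rref{eq7} permits one upward excursion of the output before the monotone descent sets in. Throughout I set $\bfd\equiv 0$, so that while mode $\theta_q$ is active the bound $|\bfy(t)|\le\beta_{\theta_q}(|\bfy(t_j)|,t-t_j)$ is an envelope that is non-increasing in $t$, with $\beta_{\theta_q}(\cdot,0)$ strictly increasing and, by Assumption~\ref{ass2}, $\beta_{\theta_q}(\Delta_{q+1},0)\le\Delta_{q+2}$. I first recall from the proof of item (iii) of Theorem~\ref{Theorem1} (which, by Theorem~\ref{Theorem2}, applies verbatim to \rref{eq2}, \rref{eq7}) that, with $i(0)=r$ and $|\bfh(\bfx_0)|\in[\Delta_r,\Delta_{r+1})$, the normed output can climb at most into the adjoining interval $[\Delta_{r+1},\chi_{\theta_{r+1}}(\Delta_{r+2}))$ and thereafter decreases monotonically, interval by interval, down to the set $\{|\bfy|\le 0.5\Delta_1\}$. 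The summand $\sum_{k=0}^r T_k(\Delta_{k+1})$ will account for this monotone descent exactly as in Corollary~\ref{Corollary1}, while the extra terms $T_r(\Delta_{r+1})+T_{r+1}(\Delta_{r+1})$ pay for the excursion.

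The argument splits according to the position of $|\bfh(\bfx_0)|$ relative to the hysteresis zone $[\chi_{\theta_r}(\Delta_{r+1}),\Delta_{r+1})$. If $|\bfh(\bfx_0)|\in[\Delta_r,\chi_{\theta_r}(\Delta_{r+1}))$, then $\beta_{\theta_r}(|\bfy(0)|,0)<\beta_{\theta_r}(\chi_{\theta_r}(\Delta_{r+1}),0)=\Delta_{r+1}$, so the output never reaches $\Delta_{r+1}$, no upward switch occurs, and by the computation of Corollary~\ref{Corollary1} the convergence time is bounded by $\sum_{k=0}^r T_k(\Delta_{k+1})$, well below the claimed estimate. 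If instead $|\bfh(\bfx_0)|$ lies in the hysteresis zone, the output may reach $\Delta_{r+1}$ at a first instant $t_1$, triggering a switch to $\theta_{r+1}$; I then decompose $[t_0,\infty)$ into the climb $[t_0,t_1)$, the excursion in interval $r+1$ over $[t_1,t_2)$, and the subsequent monotone descent from interval $r$.

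The three phases are bounded as follows. For the climb I use that, by \rref{seis}, the envelope $\beta_{\theta_r}(|\bfy(0)|,t-t_0)$ reaches $\chi_{\theta_{r-1}}(\Delta_r)<\Delta_r$ no later than $t_0+T_r(|\bfy(0)|)\le t_0+T_r(\Delta_{r+1})$ (using that $T_r$ is increasing and $|\bfy(0)|<\Delta_{r+1}$); hence if the upward switch has not already occurred by that time a downward switch is forced, so in every case $t_1-t_0\le T_r(\Delta_{r+1})$. During the excursion $\theta_{r+1}$ is active starting from $|\bfy(t_1)|=\Delta_{r+1}$; since $\Delta_{r+1}<\chi_{\theta_{r+1}}(\Delta_{r+2})$ the envelope stays strictly below $\Delta_{r+2}$, so no further climb to interval $r+2$ is possible, and by \rref{seis} the output returns to $\chi_{\theta_r}(\Delta_{r+1})$ within $T_{r+1}(\Delta_{r+1})$ units of time, triggering the switch back to interval $r$. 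From that instant the output lies in the non-hysteresis part $[\Delta_r,\chi_{\theta_r}(\Delta_{r+1}))$, so, as in the first case, it can no longer climb and decreases monotonically, costing at most $\sum_{k=0}^r T_k(\Delta_{k+1})$. Adding the three contributions yields the stated bound, the excursion case being the worst one.

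The main obstacle is the bookkeeping that guarantees only one upward excursion can occur and that its duration is controlled with only an upper envelope at hand. This is resolved by the monotonicity of the envelope together with the strict inequality defining $[\Delta_r,\chi_{\theta_r}(\Delta_{r+1}))$: re-entry into this open sub-interval forces $|\bfy|<\chi_{\theta_r}(\Delta_{r+1})$, whence $\beta_{\theta_r}(|\bfy|,0)<\Delta_{r+1}$, so $\Delta_{r+1}$ can never be attained again and no second excursion is possible; the inequality $\beta_{\theta_r}(\Delta_{r+1},0)\le\Delta_{r+2}$ from Assumption~\ref{ass2} confines the excursion to the single interval $r+1$.
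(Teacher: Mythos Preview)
Your argument is correct and follows essentially the same approach as the paper's own proof: the paper also observes that from $|\bfh(\bfx_0)|\in[\chi_{\theta_r}(\Delta_{r+1}),\Delta_{r+1})$ the output may make at most one upward excursion into $[\Delta_{r+1},\Delta_{r+2})$ and thereafter the trajectory of \rref{eq2}, \rref{eq7} mimics that of \rref{eq2}, \rref{eq3}, \rref{eq5}, \rref{seis}, whence Corollary~\ref{Corollary1} supplies the descent estimate. Your version is considerably more explicit than the paper's --- you actually carry out the bookkeeping of the three phases and justify why no second excursion can occur --- but the underlying idea is identical.
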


  \begin{proof}
If ${\rm {\bfh}}(\vert {\rm {\bfx}}(0)\vert )\in [\,\chi_{\theta _r } (\Delta _{r+1} ),\Delta _{r+1} )$ then $|\bfy(t)|$ may, in general, reach the interval $[\,\Delta _{r+1} ,\Delta _{r+2} )$ even in the case ${\rm {\bf d}}(t)\equiv 0$. From (\ref{eq7}) and Assumptions \ref{ass1}, \ref{ass2}  it follows that $|\bfy(t)|$ may not increase and the trajectories of the system (\ref{eq2}), (\ref{eq7}) behave similarly as the trajectories of the system (\ref{eq2}), (\ref{eq3}), (\ref{eq5}), (\ref{seis}). This confirms the corollary. 
  \end{proof}

The advantage of the hysteresis supervisor (\ref{eq7}) over the dwell-time supervisor (\ref{eq3}) lays in the fact that for the latter, during dwell-time, the system's output can in general reach any interval $[\,\chi _{\theta _q } (\Delta _{q+1} ),\Delta _{q+1} 
)$, $q\in[0,M] $ before a switch occurs. In contrast to this, in the case of the hysteresis supervisor (\ref{eq7}) only adjoint intervals may be taken into account which results in a simpler analysis of the system's behavior. Additionally, the supervisor (\ref{eq7}) ensures that only the system $\theta_q $ may become active for output values in the interval $[\,\Delta _q ,\Delta _{q+1})$, while for the dwell-time supervisor (\ref{eq3}) it is possible that the interval is reached during the dwell-time period by any (sub)system from the family (\ref{eq1}). The latter allows to relax Assumption \ref{ass1} for the hysteresis supervisor, as is explained in Remark \ref{rmk50}.

On the other hand, as it is established in Corollaries \ref{Corollary1}, \ref{Corollary2} and discussed below the latter, a proper choice of output depending dwell-time functions $T_q(\cdot)$ can ensure that the system posses additional (desired) stability properties; for instance, the upper estimate (\ref{eq6}) given in Corollary \ref{Corollary1} for supervisor (\ref{eq3}) is better than the estimate provided by the conditions of Corollary \ref{Corollary3} for supervisor (\ref{eq7})). Additionally for appropriate (large) values of dwell-time functions $T_q$ the switched system may admit an SIIOS like estimate  --\cf\cite{al:IFAC08-1}. Another shortage of supervisor (\ref{eq7}) with respect to \rref{eq3} is that the hysteresis-based supervisor allows, in general, for regimes with ``high'' rate of switches.

Finally, it is worth pointing out that an important difference between the hysteresis supervisor proposed above and others published in the literature --\cf \cite{PSM}, is that the hysteresis and partition properties are used without restrictions on dwell time values.

\section{Applications}\label{sec:appl}
\subsection{Hybrid observer design for Lur'e systems}
\label{sec:lure}

The problem of observer design for nonlinear dynamical systems has been one of the centers of  attention of the control community during decades and still has not been completely solved for nonlinear systems --\cf\, \cite{KKK,MARTOM}. Here, we address the observer design problem for Lur'e-type systems via the supervisory control approach proposed above. Consider the system
\begin{equation}
\label{eq8}
{{\dot {\bfx}}}={{\bf A}}({{\bfy}})\,{{\bfx}}+\phi 
({{\bfy}})+{{\bf B}}\,{{\bf d}},
\quad
{{\bfy}}={{\bf C}}\,{{\bfx}}
\end{equation}
where ${{\bfx}}\in R^n$, ${{\bfy}}\in R^p$, ${{\bf d}}\in R^m$ are state, measured output and disturbance vectors correspondingly. This problem is well investigated \eg,  it can be solved via linear systems observers design theory. Indeed, after \cite{F1}, assume that there exists a continuous matrix function ${{\bf K}}:R^p\to R^{n\times p}$ such that for some positive definite matrix ${{\bf P}}$ with dimension $n\times n$ for any ${\rm {\bfy}}\in R^p$ we have 
\begin{equation}
\label{eq9}
{{\bf G}}({{\bfy}})^T{{\bf P}}+{{\bf P}}{{\bf 
G}}({{\bfy}})\le -\alpha {{\bf P}},
\quad
{{\bf G}}({{\bfy}})={{\bf A}}({{\bfy}})-{{\bf 
K}}({{\bfy}}){{\bf C}}
\end{equation}
for some $\alpha >0$. A globally exponentially convergent observer for system (\ref{eq8}) takes form
\begin{equation}
\label{eq10}
{{\dot {\bfz}}}={{\bf A}}({{\bfy}})\,{{\bfz}}+\phi 
({{\bfy}})+{{\bf K}}({{\bfy}})\,[{{\bfy}}-{\rm 
{\bf C}}\,{{\bfz}}],
\end{equation}
where ${{\bfz}}\in R^n$ is a  vector of estimates of ${{\bfx}}$. The observation error ${{\bfe}}={{\bfx}}-{{\bfz}}$ possesses the following dynamics:
\[
{{\dot {\bfe}}}={{\bf G}}({{\bfy}})\,{{\bfe}}+{\rm 
{\bf B}}\,{{\bf d}},
\]
whose origin is exponentially stable for $\bf d\equiv 0$, due to (\ref{eq9}). In the terminology of the paper, the system (\ref{eq8}), (\ref{eq10}) is SIIOS with respect to output ${{\bfe}}$ and input ${{\bf d}}\not\equiv 0$. 

An unclear yet natural question is that of assigning the gain function ${\rm
{\bf K}}$ to ensure the best (trading off, speed and peaking) convergence of
error ${{\bfe}}$ to zero. Assume that the matrix inequality (\ref{eq9}) may be
solved for any $0<\alpha _{\min } \le \alpha \le \alpha _{\max } $ then it is
possible to design a gain function ${{\bf K}}$ to provide ``slow'' ($\alpha
_{\min } )$ or ``fast'' ($\alpha _{\max } )$ speed of error convergence to
zero. Choosing ${{\bf K}}$ for the ``fastest'' solution may, in general, result
in large overshoots (peaking). Such negative behavior is significant for
instance in the context of observer-based designed synchronization --\cf\,
\cite{NIJMAR} or when the state variables ${{\bfz}}$ in (\ref{eq10}), as well as
inputs ${{\bf K}}({{\bfy}})\,[{{\bf y}}-{{\bf C}}\,{{\bfz}}]$, are physically
meaningful.  On the other hand, for small deviations \ie\,, when the error
${{\bfe}}$ stays under some admissible bounds, the application of ``fastest''
gain ${{\bf K}}$ is also undesirable. Thus, for small and large amplitudes of
observation error ${{\bfe}}$ it is desirable to apply a gain function ${{\bf
K}}$ close to $\alpha _{\min } $, while on average amplitudes of ${{\bfe}}$ the
``fastest'' gain ${{\bf K}}$ may provide better performance. This leads to a
natural switching design between ``locally optimal'' controls.

To deal with this problem the proposed approach can be applied. If the system 
(\ref{eq8}) is forward complete for any ${{\bf d}}\in M_{R^m} $, then for any 
triplet $({{\bf K}}_i ({{\bfy}}),{{\bf P}}_i ,\alpha _i 
)$, $1\le i\le N$ such that (\ref{eq9}) is satisfied the system (\ref{eq8}), (\ref{eq10}) is 
forward complete and SIIOS with respect to the output ${{\bfe}}$ and the  input 
${{\bf d}}$ and  
\[
\beta _i (s,r)=\sqrt {\,2\,\frac{\lambda _{\max } ({{\bf P}}_i 
)}{\lambda _{\min } ({{\bf P}}_i )}} \,s\,e^{-0.25\,\alpha _i 
\,r},
\]\[
\gamma (s)=2\,\rho \,\vartheta _{\max } ({{\bf B}})\,s,
\quad
\rho =\mathop {\max }\limits_{1\le i\le N} \left\{\,\frac{\vartheta _{\max } 
({{\bf P}}_i )}{\sqrt {\,\alpha _i } \,\lambda _{\min } ({{\bf 
P}}_i )}\,\right\},
\]
where $\lambda _{\min } ({{\bf P}})$, $\lambda _{\max } ({{\bf P}})$ are minimum
and maximum eigen-values of matrix ${{\bf P}}$ and $\vartheta _{\max } ({{\bf
P}})$ is the maximum singular value of ${{\bf P}}$. Therefore, Assumption
\ref{ass1} holds. Assumption \ref{ass2} is also satisfied for a suitable
partition of the interval $[0,+\,\infty )$: in this case the supervisor
(\ref{eq3}) provides a desired switching between observers with different gains,
which allows to avoid the peaking phenomenon while decreasing ``control energy''
spent on observation (respectively in synchronization). Next, we consider a concrete brief example; for simplicity, we
set $T_i(s)=T_{\min}>0$ that is, we use a constant dwell-time.

\begin{example}
Let the system (\ref{eq8}) be a lossless pendulum \ie,
\[
\begin{array}{l}
 \dot {x}_1 =x_2 ;\,\,\,y=x_1 \,; \\ 
 \dot {x}_2 =-\omega ^2\sin (x_1 )+d\,, \\ 
 \end{array}
\]
where $\omega \in R$ is a known parameter. 

\vbox to 1.8in {\ }
\begin{figure}[h]
  \begin{picture}(0,0)\footnotesize
    \put(0,0){\includegraphics[height=1.8in]{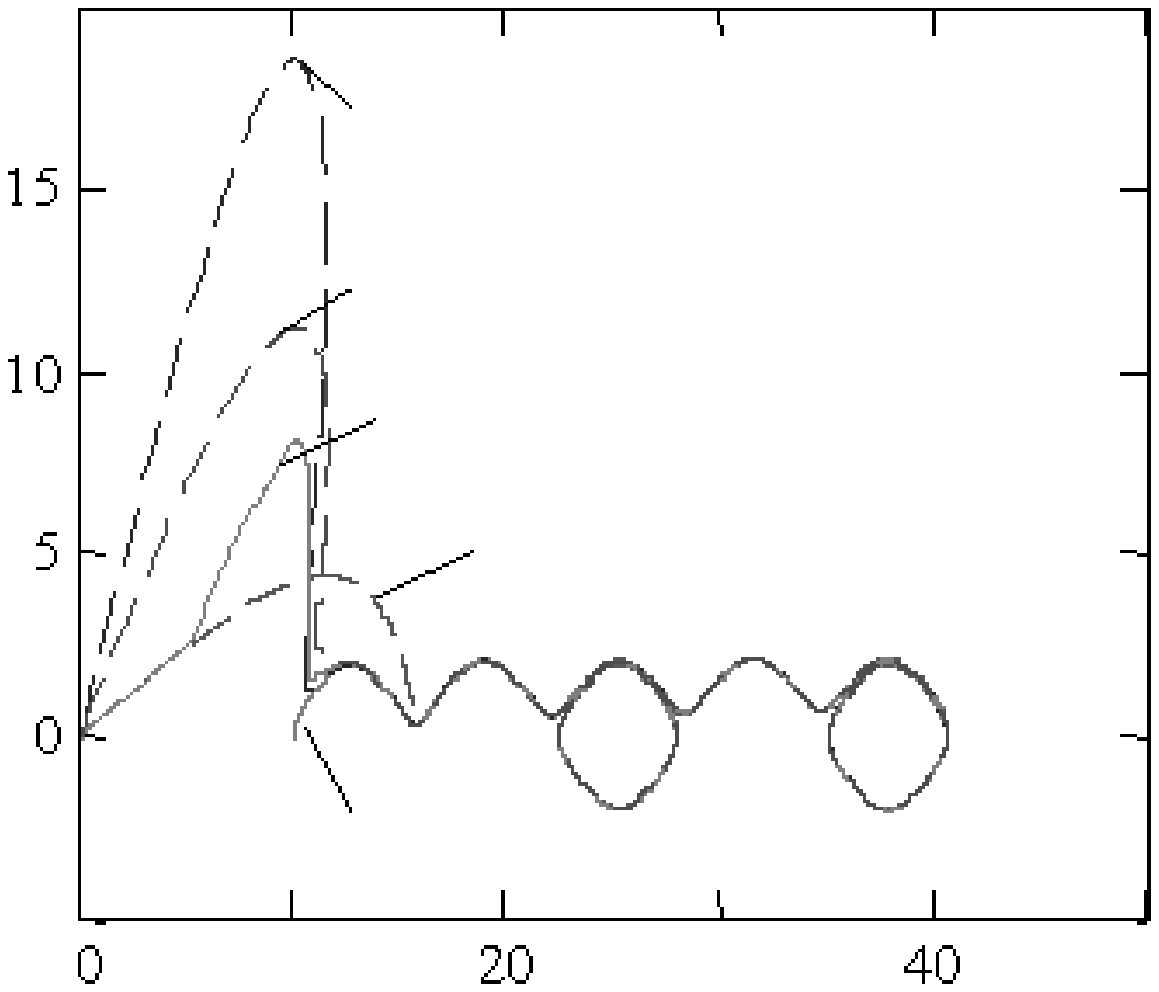}}
    \put(47,90){median} \put(50,72){hybrid} \put(64,55){slow} \put(47,107){fast} 
    \put(43,15){pendulum phase curve} \put(0,105){\begin{rotate}{90}$x_2,\, z_2$\end{rotate}}
    \put(130,-5){$x_1,\ z_1$}
    \put(75,-15){(a)}
    \put(165,0){\includegraphics[height=1.85in]{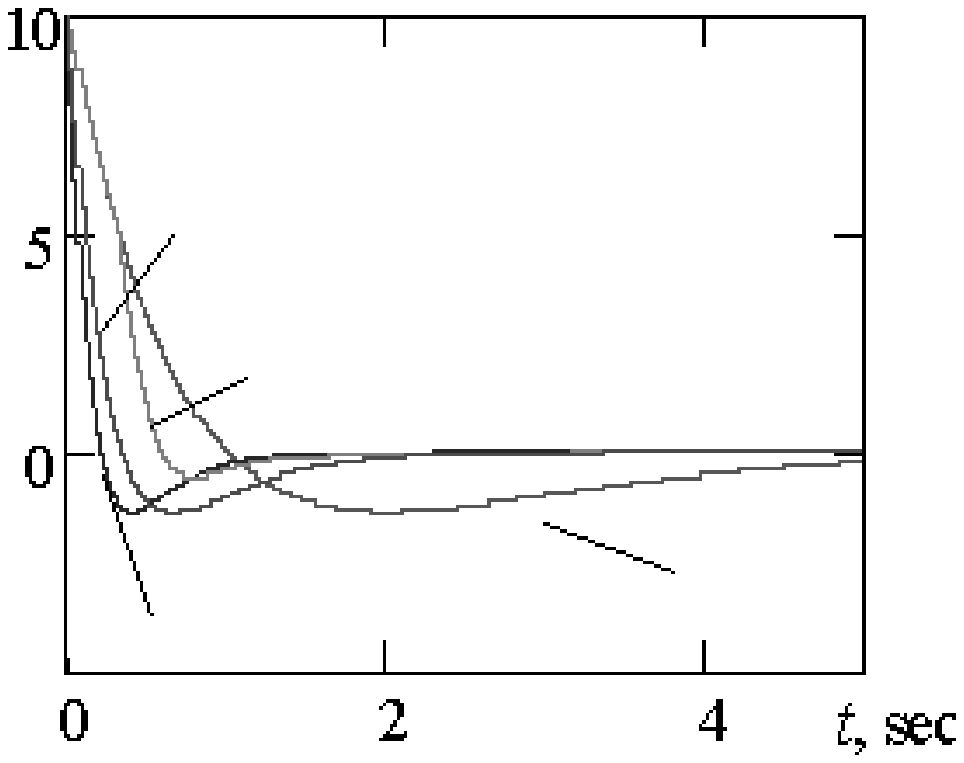}}
    \put(245,-15){(b)}
    \put(196,87){median} \put(208,62){hybrid} \put(277,22){slow}  \put(192,17){fast}
    \put(295,0){\colorbox{white}{$t$ [sec]\ \ \ }}
    \put(165,105){\begin{rotate}{90}$e_1$\end{rotate}}
    \put(330,0){\includegraphics[height=1.82in]{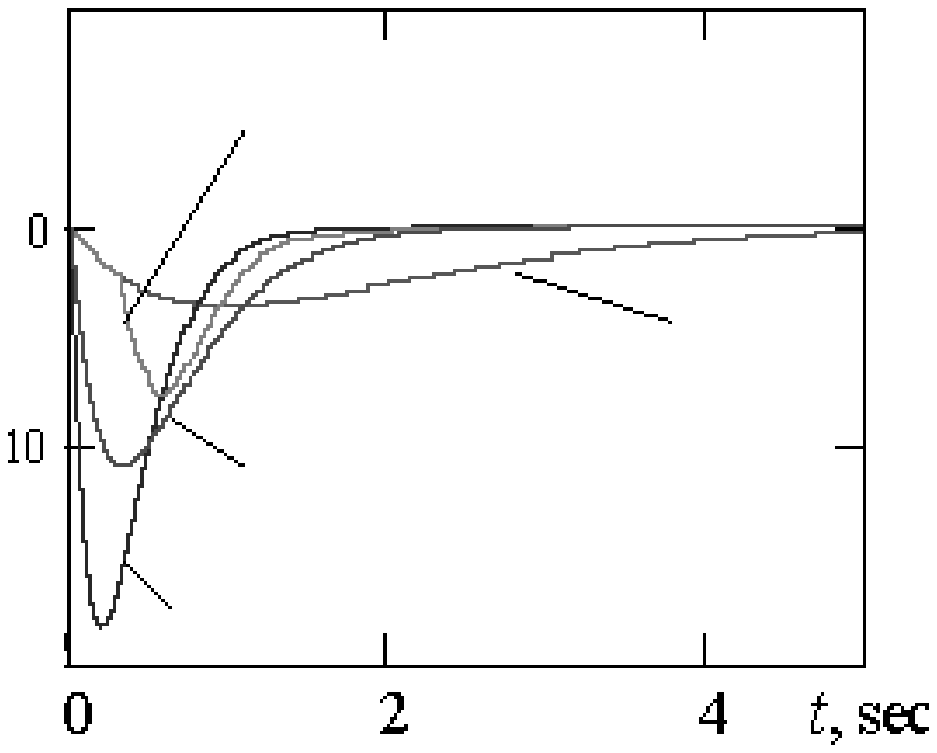}}
    \put(412,-15){(c)}
    \put(372,46){median} \put(377,107){hybrid} \put(441,63){slow} \put(360,17){fast} 
    \put(465,2){\colorbox{white}{$t$ [sec]\ \ \ }}
    \put(330,105){\begin{rotate}{90}$e_2$\end{rotate}}
  \end{picture}
\ \\
\caption{Lur'e system: (a) pendulum and observer trajectories; (b) position estimation errors; (c) velocity estimation errors }
\label{fig2}
\end{figure}

The observer (\ref{eq10}) takes the form
\[
\begin{array}{l}
 \dot {z}_1 =z_2 +k_1 (x_1 -z_1 ); \\ 
 \dot {z}_2 =-\omega ^2\sin (x_1 )+k_2 (x_1 -z_1 )\,, \\ 
 \end{array}
\]
where ${{\bf K}}=[k_1 \,\,\,k_2 ]^T$, $k_1 >0$, $k_2 >0$. Choosing $k_1 =\lambda
_1 +\lambda _2 $ and $k_2 =\lambda _1 \,\lambda _2 $ it is possible to assign
any real negative eigen-values $-\lambda _1 $, $-\lambda _2 $ to the poles of the closed-loop
system. We compute three choices of observer gains for the system $(N=3)$ as
${{\bf K}}_{slow} =[2\,\,\,1]^T$ ($\lambda _1 =\lambda _2 =1)$; ${\rm 
{\bf K}}_{median} =[6\,\,\,9]^T$ ($\lambda _1 =\lambda _2 =3)$; ${\rm 
{\bf K}}_{fast} =[10\,\,\,25]^T$ ($\lambda _1 =\lambda _2 =5)$,
\noindent which provides different speeds of observation error convergence (for $\omega =1$)  --\cf Figs. \ref{fig2}b and \ref{fig2}c. The hybrid observer is constructed as follows: let the following partition of $\mR_+$ be given ($M=3)$:
\[
\Delta _0 =0,
\quad
\Delta _1 =0.1,
\quad
\Delta _2 =2,
\quad
\Delta _3 =5,
\quad
\Delta _4 =+\,\infty ;
\]
with the corresponding operation modes $\theta _0 =median$, $\theta _1 =fast$, $\theta _2 =median$, $\theta _3 =slow$ and constant dwell-time $T_i(s)\equiv T_{\min}:=0.01$sec. In view of the responses depicted in Figs. \ref{fig2}b and \ref{fig2}c we identify slow, median and fast modes. In Fig. \ref{fig2}a one can see the phase curves of the pendulum (\ie, in the space $x_1$--$x_2$) and of the observer (\ie, in the space $z_1$--$z_2$). As one appreciates, setting high observer gains for increased speed entails large overshoots and vice-versa. Based on such observations, to avoid peaking while  conserving a fast convergence rate, we decide to apply the slow observer for large observation errors and to use the fast observer only for sufficiently small values of observation errors. Besides, in the simulation scenario we have added the external disturbance $d(t)=0.05\,\sin (0.3\,t)$; the system's response to the perturbation is appreciated in the loops (closed curves) that appear in Fig \ref{fig2}a \ie, for ``large'' values of the disturbance, the pendulum undergoes full revolutions while for small values of the disturbance, only small oscillations are observed. The performance improvement via the hybrid observer is clear from Fig. \ref{fig2}.
\end{example}


\subsection{Hybrid synchronization of Lorenz oscillators}  
\label{sec:lorenz}

Consider the problem of master-slave synchronization --\cf \cite{PECCAR} of two Lorenz systems:
\begin{equation}
  \label{lo:eq1}
\mbox{master: }\left\{\begin{array}{lcl}
\dot {x}_1 &=&\sigma \,(\,y_1 -x_1 \,)+d_1 (t)\\
\dot {y}_1 &=&x_1 \,(\,\rho -z_1 \,)-y_1 +d_2 (t);\\
\dot {z}_1 &=&x_1 \,y_1 -\beta \,z_1 +d_3 (t)
\end{array}\right.
\end{equation}
\begin{equation}
  \label{lo:eq2}
\mbox{slave: }\left\{\begin{array}{lcl}
\dot {x}_2 &=&\sigma \,(\,y_2 -x_2 \,)\\
\dot {y}_2 &=&x_2 \,(\,\rho -z_2 \,)-y_2 +u_1 \\
\dot {z}_2 &=&x_2 \,y_2 -\beta \,z_2 +u_2 ,
\end{array}\right.
\end{equation}
where $(\,x_i ,y_i ,z_i \,)\in R^3$, $i=1,2$ are the state variables of systems
(\ref{lo:eq1}), (\ref{lo:eq2}); $u_i \in R$, $i=1,2$ are control inputs to
system (\ref{lo:eq2}), ${\rm {\bf u}}=(\,u_1 \,\,u_2 \,)$; $d_i \in R$,
$i=1,2,3$ are disturbances on the system (\ref{lo:eq1}) hence ${\rm {\bf
d}}=(\,d_1 \,\,d_2 \,\,d_3 \,)$ and ${\rm {\bf d}}\in R_{R^3} $; the parameters
$\sigma >0$, $\rho >0$ and $\beta >0$ are identical for both systems. System
(\ref{lo:eq1}) plays a role of ``master'' and system (\ref{lo:eq2}) is the
``slave'', it is assumed that with given disturbing inputs all solutions of
system (\ref{lo:eq1}) are bounded for all $t\ge 0$. We also assume that state
variables $(\,x_i ,y_i ,z_i \,)$, $i=1,2$ of the master, (\ref{lo:eq1}), and the slave, 
(\ref{lo:eq2}), are available from measurements. It is a well known fact that for
$\sigma =10$, $\beta =8/3$ and $\rho =28$ with $d_i =0$, $i=1,2,3$ the system
(\ref{lo:eq1}) is chaotic --\cf\cite{Lo}. We suppose that the disturbances ${\rm {\bf
d}}$ are small enough and they do not destroy the natural strange attractor of the system for the case without the disturbances. The problem of
synchronization of Lorenz systems has seen a lot of attention during the last
decades --\cf \cite{BTWW,Ma,FNA}, partly because its applicability in encoding and secure telecommunication --\cf \cite{KUOOPPSTR}. Briefly, in such context the goal is to generate a chaotic carrier signal by the transmitter (the master system) that may be decoded provided that the receiver (the slave system) synchronizes its dynamics (at least with respect to an output) with the master.

While the synchronization problem may be solved under different control theory viewpoints as observer design --\cf \cite{NIJMAR} or tracking control --\cf \cite{al:LORZAV} we wish to illustrate here how hybrid control may be used to achieve {\em controlled} synchronization while minimizing the input energy. To that end, we define the synchronization errors as $e_1=x_1 -x_2 $, $e_2 =y_1 -y_2 $, $e_3 =z_1 -z_2 $ which obey the dynamics:
\begin{equation}
\label{lo:eq3}
\left\{
\begin{array}{lcl}
  \dot {e}_1 =\sigma (e_2 -e_1 )+d_1 (t)\\
\dot {e}_2 =e_1 \rho -x_1 z_1 +x_2 z_2 -e_2 +d_2 (t)-u_1 \\
\dot {e}_3 =x_1 y_1 -x_2 y_2 -\beta e_3 +d_3 (t)-u_2 .
\end{array}\right.
\end{equation}
We design control inputs that ensure input-to-state stability for system (\ref{lo:eq3}); in this case, systems (\ref{lo:eq1}), (\ref{lo:eq2}) are SIIOS with respect to the synchronization error ${\rm {\bf e}}=(e_1,\, e_2,\, e_3 )^\top$ as output and the input ${\rm {\bf d}}$. 
In particular, we use the hysteresis supervisor \rref{eq7}.

To comply with Assumption \ref{ass1} we first design two controllers that achieve the synchronization goal: the first is a cancellation control law and the second is based on the linearization around the origin. The cancellation law is given by
\begin{equation}
\label{lo:eq4}
u_1 =\left( {\,\rho +\sigma -2\,\sqrt {\,(\,1-\lambda \,)\,\sigma } -z_2 \,} 
\right)\,e_1 ,
\quad
u_2 =y_2 \,e_1 ,
\quad
0<\lambda <1,
\end{equation}
for which system (\ref{lo:eq3}) has Lyapunov function $V(\,{\rm {\bf e}}\,)=0.5\,{\rm {\bf e}}^T{\rm {\bf e}}$ and
\[
\dot {V}\le -\lambda \,\sigma \,e_1^2 -\left( {\,\sqrt {\,(1-\lambda 
)\,\sigma } \,e_1 -e_2 \,} \right)^2-\beta _3 \,e_3^2 +{\rm {\bf e}}^T{\rm 
{\bf d}}.
\]
Clearly the system with such Lyapunov function globally satisfies the Assumption 1.

The second control is local in the sense that it may be applied only if $\vert 
{\rm {\bf e}}\vert \,\,\le \varepsilon $, where $\varepsilon >0$ is given:
\begin{equation}
\label{lo:eq5}
u_1 =\alpha \,e_1 ,
\quad
u_2 =0,
\quad
\alpha >0.
\end{equation}
With control (\ref{lo:eq5}) the system (\ref{lo:eq3}) can be rewritten as follows
\[
{\rm {\bf \dot {e}}}={\rm {\bf A}}\,{\rm {\bf e}}+{\rm {\bf D}}(t),
\quad
{\rm {\bf A}}=\left[ {\,{\begin{array}{*{20}c}
 {-\sigma } \hfill & \sigma \hfill & 0 \hfill \\
 {\rho -\alpha } \hfill & {-1} \hfill & 0 \hfill \\
 0 \hfill & 0 \hfill & {-\beta } \hfill \\
\end{array} }\,} \right],
\quad
{\rm {\bf D}}(t)=\left[ {\,{\begin{array}{*{20}c}
 {d_1 (t)} \hfill \\
 {d_2 (t)-x_1 (t)\,z_1 (t)+x_2 (t)\,z_2 (t)} \hfill \\
 {d_3 (t)+x_1 (t)\,y_1 (t)-x_2 (t)\,y_2 (t)} \hfill \\
\end{array} }\,} \right],
\]
where $\vert \vert {\rm {\bf D}}\vert \vert \,\,\le \,\,\vert \vert {\rm 
{\bf d}}\vert \vert +4\,\varepsilon ^2$ and ${\rm {\bf D}}\in \mR_{\mR^3} $ is a
``new'' bounded disturbance. The system is linear time invariant and the matrix ${\rm {\bf A}}$ is Hurwitz for $\alpha >27$. The system with control (\ref{lo:eq5}) satisfies Assumption \ref{ass1} locally: on $\{\vert {\rm {\bf e}}\vert \,\,\le 
\varepsilon \}$. If $\varepsilon <1$ the system is input-to-state stable with input $ {\rm {\bf D}}$.

A third control input is considered which consists in applying\ldots\, $no$ input \ie,
\begin{equation}
\label{lo:eq6}
u_1 =0,
\quad
u_2 =0.
\end{equation}
The motivation for this control law is that both, master and slave, systems have bounded solutions. For the former \ie,  (\ref{lo:eq1}) it holds by assumption while for the slave system (\ref{lo:eq2}) we know that all trajectories approach a strange attractor which is strictly contained in a compact. 

We can now rewrite the system (\ref{lo:eq3}) as
\[
{\rm {\bf \dot {e}}}={\rm {\bf {A}'}}\,{\rm {\bf e}}+{\rm {\bf 
{D}'}}(t),
\quad
{\rm {\bf {A}'}}=\left[ {\,{\begin{array}{*{20}c}
 {-\sigma } \hfill & \sigma \hfill & 0 \hfill \\
 0 \hfill & {-1} \hfill & 0 \hfill \\
 0 \hfill & 0 \hfill & {-\beta } \hfill \\
\end{array} }\,} \right],
\quad
{\rm {\bf {D}'}}(t)=\left[ {\,{\begin{array}{*{20}c}
 {d_1 (t)} \hfill \\
 d_2 (t)-x_1 (t)\,z_1 (t)+x_2 (t)\,z_2 (t)\hfill\\
\ \ \ \ +\rho \,[\,x_1 (t)-x_2 (t)\,] \hfill \\
 {d_3 (t)+x_1 (t)\,y_1 (t)-x_2 (t)\,y_2 (t)} \hfill \\
\end{array} }\,} \right]
\]
with disturbance ${\rm {\bf {D}'}}\in \mR_{\mR^3} $. Matrix ${\rm {\bf {A}'}}$ 
is Hurwitz and the system satisfies Assumption \ref{ass1} globally.

Thus, we have three systems in \rref{eq2} which are composed of systems
(\ref{lo:eq1}), (\ref{lo:eq2}) with controls (\ref{lo:eq4})--(\ref{lo:eq6}). Let
$I=\{\,1,2,3\,\}$ and consider the partition of $\mR_+$ defined in Assumption \ref{ass2} with $M=3$ and $\theta _0=3$, $\theta _1 =2$, $\theta _2 =1$ and $\theta _3 =3$. The {\em practical} motivation for such a partition is the following: we wish to diminish the amplitude of control energy amplitude therefore, permanent application of the cancellation controller (\ref{lo:eq4}) is not desirable; instead, we would like to switch to the linear local control (\ref{lo:eq5}) for relatively median values of the output errors (hence $\Delta _2 =\varepsilon)$. When synchronization errors are considerably small we may afford to switch off the control action \ie,  ``control'' \rref{lo:eq6} is active for output errors smaller than $\Delta _1 \ll\varepsilon \le 1$. On the other hand, in the case when differences in the state trajectories are ``large'' the use of the  cancellation control law (\ref{lo:eq4}) may lead to large overshoots in control effort which is obviously undesirable since, in particular, may cause actuator saturation. Hence, in this situation we also use control (\ref{lo:eq6}) and let the state trajectories of the forced and unforced Lorenz models (\ref{lo:eq1}) and (\ref{lo:eq2}) converge to their common strange attractor without control effort. In view of the latter, $\Delta _3 $ is chosen proportional to the diameter of a sphere strictly containing the strange attractor which may be computed numerically\footnote{We are not aware of any work computing analytically the ``size'' of the Lorenz attractor.}.

\begin{example}
Let us consider a numerical example. Let $\sigma =10$, $\beta =8/3$ and
$\rho =28$; $\lambda =0.1$ and $\alpha =28$; $\Delta _1 =0.1$, $\Delta _2 =1$
and $\Delta _3 =5$. For illustration, we have performed simulations applying the hysteresis supervisor and the controls (\ref{lo:eq4})--(\ref{lo:eq6}) individually. For the sake of comparison we use the following performance functionals:
\[
J_e =T^{-1}\int\limits_0^T {\vert {\rm {\bf e}}(t)\vert ^2dt} ,
\quad
J_a =10\,T^{-1}\int\limits_{.9\,T}^T {\vert {\rm {\bf e}}(t)\vert ^2dt} 
,
\quad
J_u =T^{-1}\int\limits_0^T {\vert {\rm {\bf u}}(t)\vert ^2dt} ,
\]
where $T>0$ defines the length of simulations' windows. The functional $J_e $ describes the overall quality of synchronization in terms of the integral square error (ISE), the functional $J_a $ gives the ISE for the last tenth part of the simulation window, the functional $J_u $ estimates the input control energy. We use two sets of initial conditions (everywhere initial time is zero):
\begin{equation}
\label{lo:eq7}
x_1 (\,0\,)=0.1,
\quad
y_1 (\,0\,)=z_1 (\,0\,)=0;
\quad
y_2 (\,0\,)=1,
\quad
x_2 (\,0\,)=z_2 (\,0\,)=-1;
\end{equation}
\begin{equation}
\label{lo:eq8}
x_1 (\,0\,)=0.1,
\quad
y_1 (\,0\,)=z_1 (\,0\,)=0;
\quad
y_2 (\,0\,)=10,
\quad
x_2 (\,0\,)=z_2 (\,0\,)=-10,
\end{equation}
which correspond to ``small'' and ``large'' initial deviations of the 
systems (\ref{lo:eq1}) and (\ref{lo:eq2}), and disturbances
\begin{equation}
\label{lo:eq9}
d_1 (t)=5\,\sin (\,0.5\,t\,),
\quad
d_2 (t)=-5\,\cos (\,0.1\,t\,),
\quad
d_3 (t)=2.5\,\sin (\,t\,).
\end{equation}
The simulation results are presented in Table \ref{tab1} for the
case without disturbances and in Table \ref{tab2} for the disturbances
(\ref{lo:eq9}) with $T=30$ sec. As it is possible to conclude from these tables the
supervisory control ensures the best asymptotic performance minimizing the
control energy. While the overall transient is worse for large initial conditions (due to the application of control (\ref{lo:eq6}) which by itself does not solve the problem of synchronization) the ``asymptotic'' quality of synchronization expressed by the index $J_a$ is comparable with those provided by controls (\ref{lo:eq4}), (\ref{lo:eq5}) but with much smaller control effort, again, due to the use of controller (\ref{lo:eq6}).

\begin{table}[htbp]
\begin{center}
\begin{tabular}{|p{77pt}|p{40pt}|p{40pt}|p{40pt}|p{40pt}|p{40pt}|p{40pt}|}
\hline
\raisebox{-1.50ex}[0cm][0cm]{}& 
\multicolumn{2}{|p{80pt}|}{\hspace{8mm}value of $J_e $} & 
\multicolumn{2}{|p{80pt}|}{\hspace{8mm}value of $J_a $} & 
\multicolumn{2}{|p{80pt}|}{\hspace{8mm}value of $J_u $}  \\
\cline{2-7} 
 & 
i.c. (\ref{lo:eq7})& 
i.c. (\ref{lo:eq8})& 
i.c. (\ref{lo:eq7})& 
i.c. (\ref{lo:eq8})& 
i.c. (\ref{lo:eq7})& 
i.c. (\ref{lo:eq8}) \\
\hline
Control (\ref{lo:eq4})& 
0.019& 
1.800& 
0& 
0& 
5.009& 
649.191 \\
\hline
Control (\ref{lo:eq5})& 
0.034& 
2.578& 
0& 
0& 
5.684& 
346.089 \\
\hline
Control (\ref{lo:eq6})& 
541.324& 
365.279& 
518.694& 
350.101& 
0& 
0 \\
\hline
Supervisory control& 
0.034& 
141.329& 
0.009& 
0.009& 
5.436& 
3.548 \\
\hline
\end{tabular}
\end{center}
\caption{Values of performance functionals for the case without disturbances}
\label{tab1}
\end{table}

\begin{table}[htbp]
\begin{center}
\begin{tabular}{|p{77pt}|p{40pt}|p{40pt}|p{40pt}|p{40pt}|p{40pt}|p{40pt}|}
\hline
\raisebox{-1.50ex}[0cm][0cm]{}& 
\multicolumn{2}{|p{80pt}|}{\hspace{8mm}value of $J_e $} & 
\multicolumn{2}{|p{80pt}|}{\hspace{8mm}value of $J_a $} & 
\multicolumn{2}{|p{80pt}|}{\hspace{8mm}value of $J_u $}  \\
\cline{2-7} 
 & 
i.c. (\ref{lo:eq7})& 
i.c. (\ref{lo:eq8})& 
i.c. (\ref{lo:eq7})& 
i.c. (\ref{lo:eq8})& 
i.c. (\ref{lo:eq7})& 
i.c. (\ref{lo:eq8}) \\
\hline
Control (\ref{lo:eq4})& 
0.570& 
2.526& 
0.864& 
0.864& 
71.065& 
675.778 \\
\hline
Control (\ref{lo:eq5})& 
0.597& 
5.181& 
0.335& 
0.335& 
50.030& 
602.633 \\
\hline
Control (\ref{lo:eq6})& 
256.362& 
428.067& 
474.852& 
375.061& 
0& 
0 \\
\hline
Supervisory control& 
0.485& 
34.445& 
0.317& 
0.317& 
45.355& 
39.365 \\
\hline
\end{tabular}
\end{center}
\caption{Values of performance functionals for the case with disturbances (\ref{lo:eq9})}
\label{tab2}
\end{table}

For completeness, some plots are shown in Figures 3 and 4. The former presents the simulation results for the case with disturbances given by (\ref{lo:eq9}) and initial conditions given by (\ref{lo:eq8}) under control (\ref{lo:eq4}). Figure 4 depicts simulation results corresponding to supervisory control under the same conditions. More particularly, we show in Fig. 3a and 4a the phase portraits of the master and slave systems under control (\ref{lo:eq4}) and hysteresis-based supervisory control respectively. In Figs. 3b and 3c we show the control efforts on different scales under control law (\ref{lo:eq4}); these may be compared with the supervisory control input depicted in Fig 4b. As it is appreciated from the plots, the application of $no$ control \ie, (\ref{lo:eq6}) for the large initial errors results in a serious decreasing in the applied control energy $J_u$ and maximal amplitude of $|u(t)|$. The latter has the cost of ``high'' peaks appreciated in the steady-state errors --\cf Fig. 4b; yet the overall steady-state behavior is comparable to that under controller (\ref{lo:eq4}) --\cf Fig. 3c.
\end{example}

\begin{figure}[h]
\vbox to 6.5in{ }
\begin{picture}(0,0)
\put(0,230){%
\begin{minipage}{3in}
{\centerline{\includegraphics[width=2.5in]{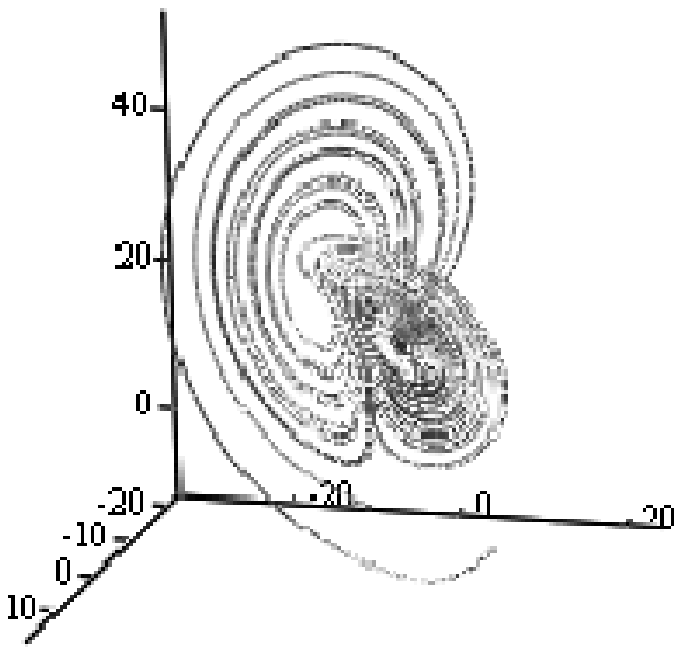}\\[-7mm]}}
{\centerline{(a) Phase portraits}\mbox{}\\[1mm]}
{\centerline{\includegraphics[width=3in]{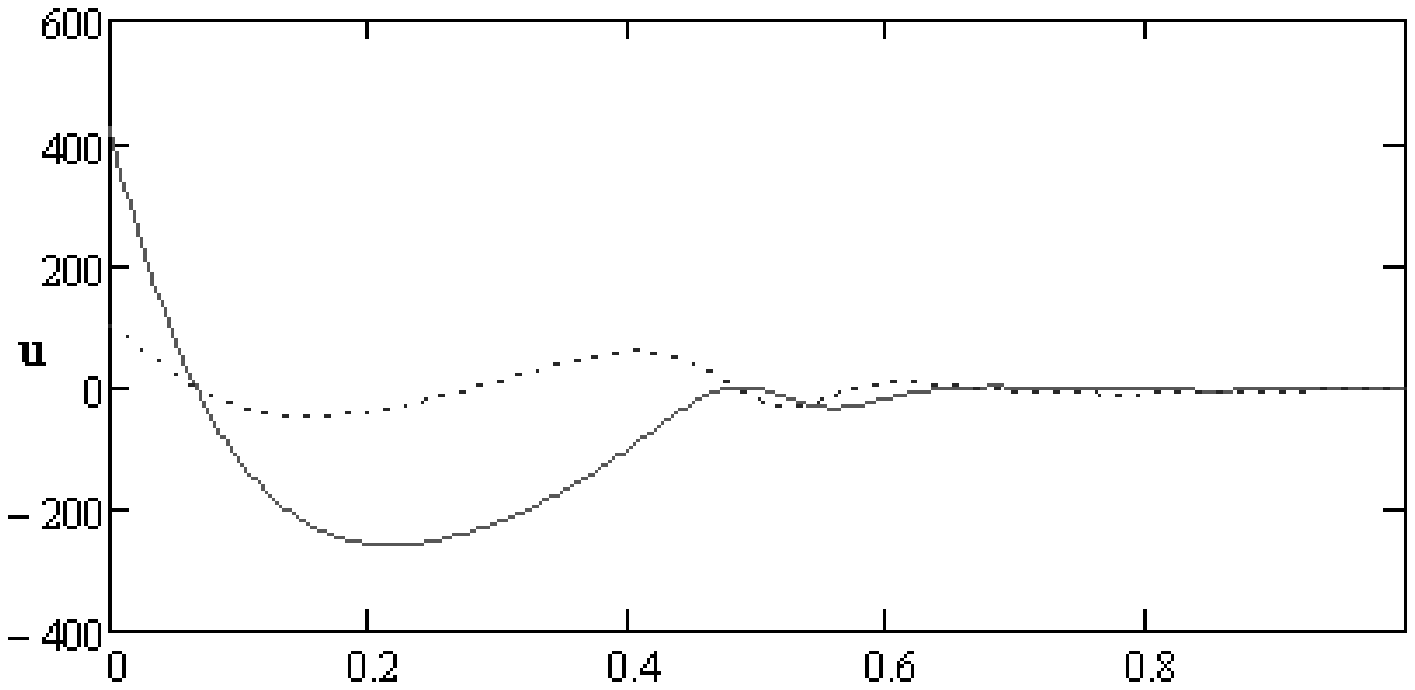}}}
\centerline{(b) Control inputs (zoom of first sec)}\mbox{}\\[3mm]
\centerline{\includegraphics[width=3in]{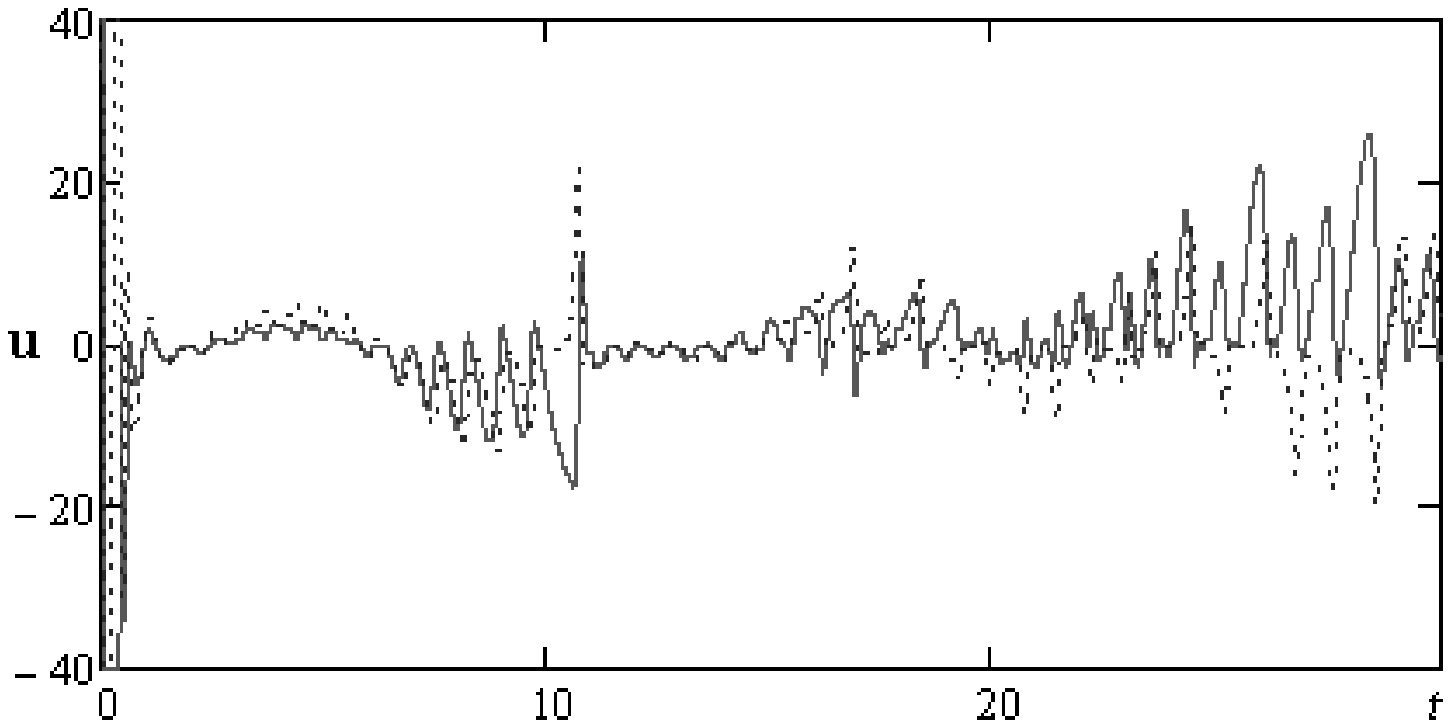}}
\centerline{(c) Control inputs (zoom)}\mbox{}\\[3mm]
\stepcounter{figure}
{Figure \thefigure: Lorenz systems under cancellation control \rref{lo:eq4}}
\hspace{5mm}\begin{picture}(0,0)
  \put(20,350){\dashline[+45]{3}(-10,-10)(15,15)}
  \put(0,330){\small master}
  \put(8,357){\dashline[+45]{3}(80,-20)(65,-15)}
  \put(80,327){\small slave}
\end{picture}
\end{minipage}}
\put(240,263){%
\begin{minipage}{3in}
\centerline{\includegraphics[width=2.5in]{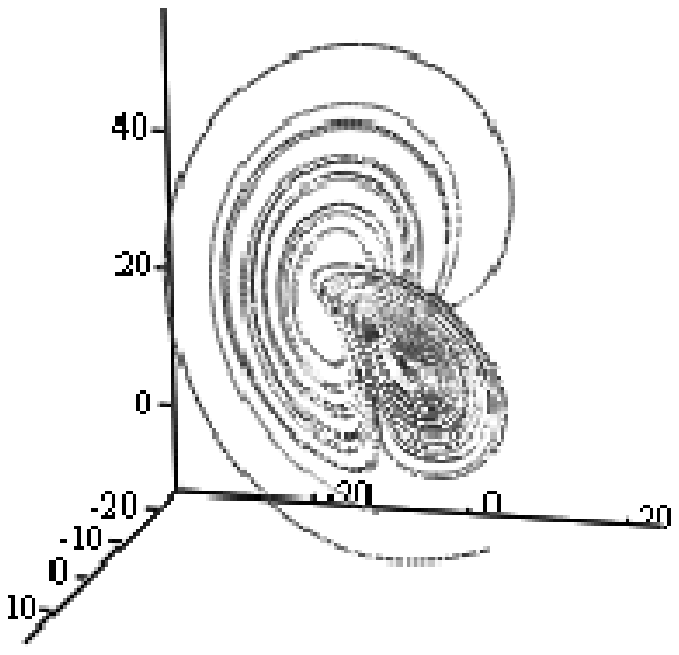}\\[-7mm]}
\centerline{(a) Phase portraits}\mbox{}\\[1mm]
\centerline{\includegraphics[width=3in]{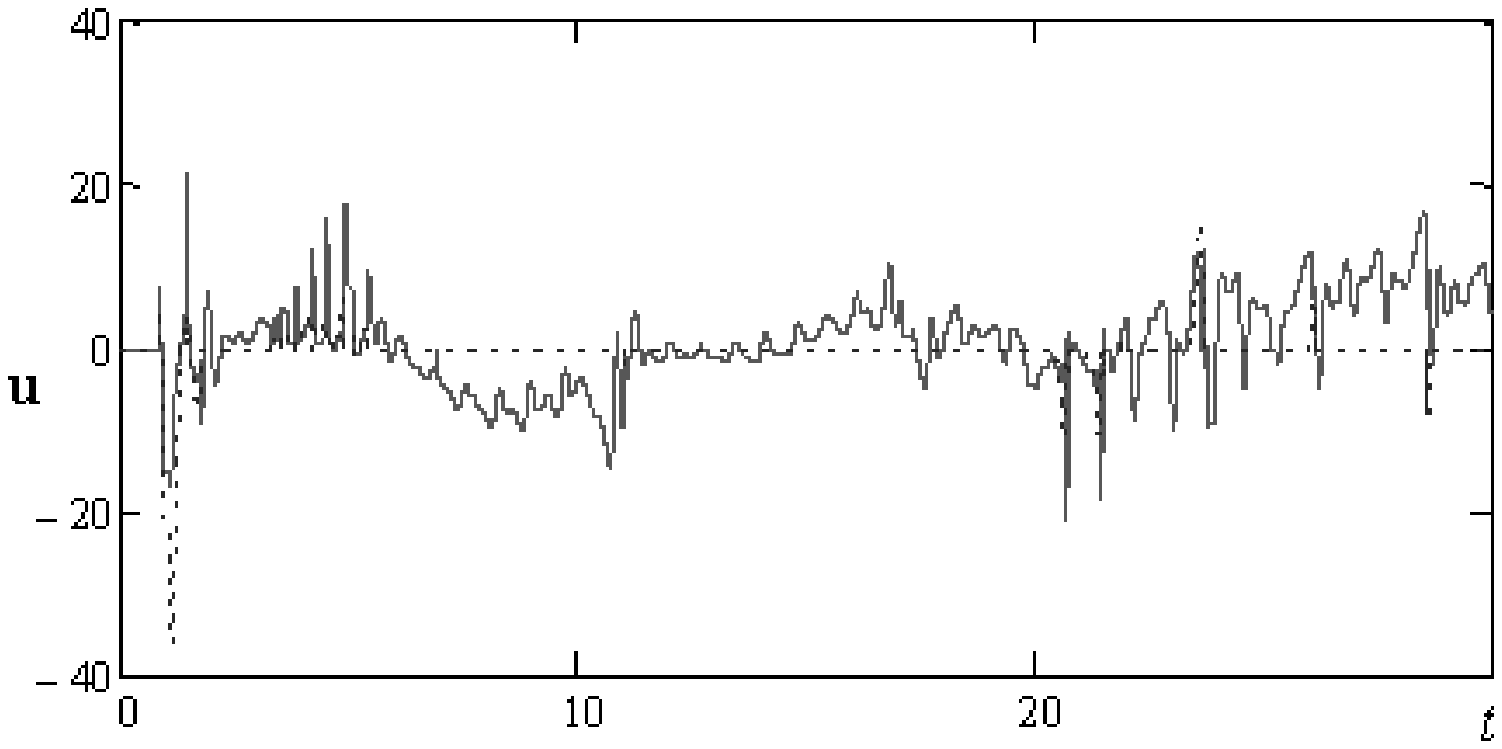}}
\centerline{(b) control inputs}\mbox{}\\[3mm]
\stepcounter{figure}
{Figure \thefigure: Lorenz systems under supervisory control}
\hspace{7mm}\begin{picture}(0,0)
  \put(37,220){\dashline[+45]{3}(-10,-10)(15,15)}
  \put(17,200){\small master}
  \put(25,227){\dashline[+45]{3}(80,-20)(65,-15)}
  \put(97,197){\small slave}
\end{picture}
\end{minipage}
}
\end{picture}
\end{figure}

\section{Conclusion}
\label{sec:conlusion}

We have addressed the problem of establishing input to output stabilization for a family of (switching) nonlinear systems. Two approaches are proposed and analyzed based on dwell-time and hysteresis supervisors. The utility of our approach is illustrated by designing a switching observer for Lur'e-type systems and the performance improvement is stressed for a particular example of a pendulum. A second illustrative application concerns the master-slave synchronization problem of two Lorenz systems.

\small\parskip=1pt
\bibliographystyle{ieeetr}
\bibliography{refs}


\end{document}